\def\NAT@def@citea{\def\@citea{\NAT@separator}}
\theoremstyle{plain}
\newtheorem{theorem}{Theorem}[section]
\newtheorem{lemma}[theorem]{Lemma}
\newtheorem{corollary}[theorem]{Corollary}
\newtheorem{proposition}[theorem]{Proposition}
\theoremstyle{definition}
\newtheorem{definition}[theorem]{Definition}
\theoremstyle{remark}
\newtheorem{remark}[theorem]{Remark}
\newcommand{\Fix}{\mathrm{Fix}}
\begin{document}

\title{Asymptotic Stability and Equilibrium Selection in Quasi-Feller Systems with Minimal Moment Conditions}

\author{
J.-G.~Attali\thanks{Contact: jean-gabriel.attali@devinci.fr} \\
\small Léonard de Vinci Pôle Universitaire Research Center \\
\small 12 Avenue Léonard de Vinci, 92916 Paris La Défense, France
}

\date{} 

\maketitle

\begin{abstract}
We study equilibrium selection for invariant measures of stochastic dynamical systems
with constant step size, under persistent noise and minimal moment assumptions, in a
general quasi-Feller framework.
Such dynamics arise in projection-based algorithms, learning in games, and systems with
discontinuous decision rules, where classical Feller assumptions and small-noise or
large-deviation techniques are not applicable.

Under a global Lyapunov condition, we prove that any weak limit of invariant measures
must be supported on the set of fixed points of the associated deterministic dynamics.
Beyond localization, we establish a sharp exclusion principle for unstable equilibria:
strict local maxima and saddle points of the Lyapunov function are shown to carry zero
mass in limiting invariant measures under explicit and verifiable non-degeneracy
conditions.

Our analysis identifies a local mechanism driven by Lyapunov geometry and persistent
variance, showing that equilibrium selection in constant-step dynamics is governed by
typical fluctuations rather than rare events.
These results provide a probabilistic foundation for stability and equilibrium selection
in stochastic systems with persistent noise and weak regularity.
\end{abstract}

\noindent\textbf{Keywords:}
Invariant measures; perturbed dynamical systems; equilibrium selection;
Quasi-Feller kernels; stochastic approximation.

\vspace{0.5em}
\noindent\textbf{MSC 2020:}
60J05; 37A30; 60B10.

\section{Introduction}

Stochastic algorithms with constant step size arise in a wide range of contexts,
including stochastic approximation \cite{benveniste,benaim,benaim2},
learning dynamics in games \cite{benaim3,kandori,blume},
vector quantization \cite{pages,pages2,pages2ter},
and stochastic optimization under persistent noise \cite{mertikopoulos}.
Unlike decreasing-step schemes, constant-step dynamics do not converge almost surely
to a single trajectory but instead typically admit invariant probability measures.
Understanding the structure of these invariant measures is therefore essential for
describing the long-term behavior and equilibrium selection properties of such systems.

A decisive step in this direction was made by Fort and Pag\`es \cite{pages},
who identified invariant measures as the natural object for studying constant-step
stochastic approximation.
Under suitable regularity assumptions, they established existence and tightness of
invariant measures and characterized their weak limits as the step size vanishes.
However, when the limiting deterministic dynamics is trivial, as in the flat case
$f=\mathrm{Id}$, invariance alone carries little geometric information and does not
lead to equilibrium selection without exploiting second-order instability effects.

The present work shows that this situation changes as soon as the deterministic drift
becomes nontrivial.
In that case, the invariant-measure framework becomes dynamically informative:
weak limits of invariant measures are constrained by the geometry of the limiting
deterministic dynamics and exhibit sharp localization and exclusion phenomena.
More precisely, under a global Lyapunov condition and minimal second-moment assumptions
on the noise, we show that any weak limit $\nu_0$ of invariant measures is supported on
the set where the Lyapunov function cannot strictly decrease.
Beyond localization, we establish local exclusion results for unstable equilibria
through a mechanism driven by Lyapunov geometry and persistent variance. This exclusion mechanism is inherently asymptotic and does not yield any exclusion at fixed step size.
In particular, if $x^*$ is a strict local maximum or a saddle point of a Lyapunov
function, we prove that $\nu_0(\{x^*\})=0$ under explicit non-degeneracy conditions
linking curvature and noise.
This mechanism does not rely on vanishing-noise asymptotics or large deviation theory,
and is of a fundamentally different nature from classical small-noise approaches
\cite{freidlin2012,kifer}.

A central difficulty in pursuing this program is that many stochastic algorithms of
practical interest involve genuinely discontinuous update rules.
Projection-based methods and differential-inclusion limits \cite{bianchi},
best-response dynamics in games \cite{benaim3},
and vector quantization algorithms \cite{pages2bis}
typically violate the Feller property, rendering classical approaches based on
pathwise convergence or large deviation principles inapplicable.
Imposing smoothness or strong regularity assumptions would therefore exclude precisely
the models that motivate the study of equilibrium selection under persistent noise.

To address this issue, we adopt the quasi-Feller framework introduced by Attali
\cite{attali}, which provides a minimal topological setting for Markov kernels with
discontinuous transitions.
This framework allows us to establish existence and tightness of invariant measures
under weak regularity assumptions and mild moment conditions on the noise, without
relying on Harris recurrence, minorization conditions, or exponential tail estimates.
More importantly, it enables a deterministic-dynamics interpretation of weak limits of
invariant measures in the vanishing step-size regime, even in the presence of
discontinuities.

\subsection*{Contributions}

The main contributions of this paper are as follows.

\begin{itemize}
\item \textbf{Invariant measures beyond the Feller setting.}
We establish existence and tightness of invariant probability measures for
constant-step stochastic dynamics under a quasi-Feller assumption and a coercive
Lyapunov condition, extending the invariant-measure approach of Fort and Pag\`es
\cite{pages} beyond the standard Feller framework.

\item \textbf{Localization of weak limits.}
We show that any weak limit of invariant measures as the step size vanishes is
invariant with respect to the limiting deterministic dynamics and can be localized
using Lyapunov geometry.

\item \textbf{Equilibrium selection under persistent noise.}
Under explicit non-degeneracy conditions, we prove that unstable equilibria,
including strict local maxima and saddle points of a Lyapunov function, are excluded
from the support of any weak limit of invariant measures.
These results rely only on finite second-moment assumptions on the noise and do not
invoke large deviation principles or smoothness of the drift.
\end{itemize}

\subsection*{Related work}

Exclusion of unstable points for stochastic approximation with decreasing step sizes
is classical and relies on pathwise arguments \cite{pemantle,brandiere2,benaim}.
Small-noise equilibrium selection is typically analyzed through large deviation
techniques \cite{freidlin2012,kifer}, which require smoothness and exponential tail
assumptions.

The quasi-Feller framework introduced by Attali \cite{attali} ensures existence and
stability of invariant measures for Markov chains with discontinuous transitions.
However, the invariant-measure approach of Fort and Pag\`es \cite{pages}, while
characterizing weak limits of invariant measures, does not provide a mechanism for
excluding unstable equilibria, as it does not exploit Lyapunov curvature information
or second-order variance effects.
To the best of our knowledge, exclusion of strict local maxima and saddle points for
constant-step stochastic dynamics under mere second-moment assumptions and in the
presence of discontinuities has not been previously established.

\subsection*{Organization of the paper}

Section~2 introduces the quasi-Feller framework and establishes existence and
tightness of invariant measures for constant-step stochastic dynamics.
Section~3 studies weak limits of invariant measures as the step size vanishes and
derives global localization results based on Lyapunov geometry.
Sections~4 and~5 are devoted to local exclusion results for unstable equilibria,
including strict local maxima and saddle points.
Several applications, including vector quantization algorithms and learning dynamics
in games, illustrate the scope of the theory.

\section{Quasi-Feller framework and invariance of measures}
\label{sec2}

This section introduces the quasi-Feller framework and states the conditions ensuring
existence and tightness of invariant probability measures associated with the family of
Markov kernels $(P^\gamma)_{\gamma \in (0,\gamma_0]}$.
These assumptions are sufficient for the convergence results established in
Sections~3--5.
A probability measure $\nu^\gamma$ is said to be invariant if $\nu^\gamma = \nu^\gamma P^\gamma$.
The main difficulty lies in the possible lack of continuity of the deterministic update
map and the noise, which prevents the use of classical Feller theory.
The quasi-Feller framework provides a minimal alternative adapted to discontinuous dynamics.

\subsection{Notation and functional setting}~\label{subsec:notation-and-framework}

We introduce the notation and functional setting used throughout the paper.
Unless otherwise specified, all definitions and assumptions introduced in this subsection
remain in force for the remainder of the paper.

\paragraph{State space and topology.}
Let $S$ be a Polish space equipped with its Borel $\sigma$-field $\mathcal{B}(S)$.
We also consider an auxiliary topological space $W$ endowed with $\mathcal{B}(W)$.
In applications, we typically assume $S \subset \mathbb{R}^d$ and $W \subset \mathbb{R}^p$.
In this case, $|\cdot|$ denotes the Euclidean norm, while $\|\cdot\|$ denotes operator norms.
For a subset $A \subset S$, $\partial A$ denotes its boundary.
For a mapping $H : S \to W$, $D_H$ denotes its set of discontinuity points.

\paragraph{Function spaces, measures, and kernels.}
We denote by $\mathcal{B}_b(S)$ the space of bounded Borel functions and by $C_b(S)$ the
space of bounded continuous functions.
Let $\mathcal{P}(S)$ be the set of probability measures on $S$.
A Markov kernel $P$ acts on functions and measures as
\[
Pg(x) := \int_S g(y) P(x,dy), \qquad
\mu P(A) := \int_S P(x,A)\,\mu(dx).
\]

\paragraph{Weak convergence.}
A family $(\nu^\gamma)_{\gamma>0} \subset \mathcal{P}(S)$ converges weakly to $\nu$
as $\gamma \to 0$ if $\nu^\gamma(g) \to \nu(g)$ for all $g \in C_b(S)$.

\subsection{The quasi-Feller framework}

Classical existence results for invariant measures typically assume that the transition
kernel $P$ is Feller.
This assumption fails in many relevant models, including projection-based algorithms
and best-response dynamics, where the deterministic update map is only piecewise continuous.
We therefore rely on the quasi-Feller framework introduced in \cite{attali}.

\begin{definition}[Quasi-Feller kernel]\label{def:quasi-feller}
Let $(S,d)$ be a Polish space.
A transition kernel $P(x,dy)$ is said to be \emph{quasi-Feller} if there exist
a topological space $W$, a Borel mapping $H:S\to W$, and a family of probability
measures $(Q(w,dy))_{w\in W}$ such that:
\begin{itemize}
\item[(i)] for every $g\in C_b(S)$, the mapping $w\mapsto Qg(w)$ is continuous on $W$;
\item[(ii)] for every bounded measurable function $g$, one has
\[
Pg(x)=Qg(H(x)) ;
\]
\item[(iii)] for every compact set $K\subset S$ and every
$w\in\overline{H(K)}$, one has
\[
Q(w,D_H)=0,
\]
where $D_H$ denotes the set of discontinuity points of $H$.
\end{itemize}
\end{definition}

\begin{remark}
Condition (iii) ensures that the noise prevents the process from concentrating on the
discontinuity set of $H$.
In many applications, such as the Lloyd algorithm, $D_H$ consists of lower-dimensional
boundaries and the condition is satisfied as soon as the noise distribution is absolutely
continuous.
\end{remark}

\begin{theorem}[Pakes--Hajek criteria]\label{ts1}
Let $S$ be a Polish space and $(P^\gamma)_{\gamma \in (0,\gamma_0]}$ be a family of
quasi-Feller kernels on $S$.
Let $V:S \to \mathbb{R}_+$ be a continuous coercive Lyapunov function.

\medskip
\noindent\emph{(Pakes' criterion).}
Assume that
\[
P^\gamma V - V \le \lambda(\gamma)\psi + \mu(\gamma),
\]
where $\psi$ is bounded from above and coercive.
Then, for each $\gamma$, there exists at least one $P^\gamma$-invariant probability measure.
If $\sup_{\gamma} \mu(\gamma)/\lambda(\gamma) < \infty$, the family of invariant measures is tight.

\medskip
\noindent\emph{(Hajek's criterion).}
Assume that
\[
P^\gamma V \le \alpha(\gamma)V + \beta(\gamma), \qquad \alpha(\gamma) \in (0,1).
\]
If $\sup_{\gamma} \beta(\gamma)/(1-\alpha(\gamma)) < \infty$, then the family of invariant
measures is tight and satisfies
\[
\sup_{\gamma} \nu^\gamma(V) < \infty.
\]
\end{theorem}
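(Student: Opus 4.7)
The plan is to combine a Foster--Lyapunov drift iteration with a Krylov--Bogolyubov construction, where the quasi-Feller decomposition replaces the Feller property at the step where weak limits are identified as invariant. Both the Pakes and Hajek parts fit the same template, the only difference being the form of the iterated drift estimate.

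For \emph{Pakes}, I would fix $\gamma$ and a starting point $x_0$ with $V(x_0)<\infty$, and form the Cesàro averages
\[
\nu_n := \frac{1}{n}\sum_{k=0}^{n-1}\delta_{x_0}(P^\gamma)^k.
\]
Iterating the drift inequality and using $V\ge 0$ yields $-\lambda(\gamma)\,\nu_n(\psi) \le V(x_0)/n + \mu(\gamma)$. Writing $\psi = C - \varphi$ with $C:=\sup\psi<\infty$ and $\varphi\ge 0$ coercive, this becomes $\nu_n(\varphi) \le C + V(x_0)/(n\lambda(\gamma)) + \mu(\gamma)/\lambda(\gamma)$, and Markov's inequality together with coercivity of $\varphi$ delivers tightness of $(\nu_n)$. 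I would then extract a weakly convergent subsequence with limit $\nu^\gamma$. Uniform tightness in $\gamma$ is immediate from $\sup_\gamma \mu(\gamma)/\lambda(\gamma)<\infty$ by passing the bound on $\nu_n(\varphi)$ to the limit.

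To promote $\nu^\gamma$ to an invariant measure, I would exploit the decomposition $P^\gamma g = Q^\gamma g \circ H$ with $Q^\gamma g$ continuous by~(i). The Cesàro identity gives
\[
\nu_n(P^\gamma g) - \nu_n(g) \;=\; \frac{1}{n}\bigl((P^\gamma)^n g(x_0) - g(x_0)\bigr) \;=\; O(1/n),
\]
so $\nu_n(P^\gamma g)$ and $\nu_n(g)$ share the limit $\nu^\gamma(g)$, and it only remains to justify $\nu_n(P^\gamma g)\to \nu^\gamma(P^\gamma g)$. Since $P^\gamma g$ is continuous at every point of $S\setminus D_H$, Portmanteau reduces this to checking $\nu^\gamma(D_H)=0$. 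This is the decisive use of condition~(iii): on a compact $K$ carrying all but $\varepsilon$ of $\nu^\gamma$, (iii) gives $P^\gamma(x,D_H)=Q^\gamma(H(x),D_H)=0$ for $x\in K$, and a Dini-type monotone approximation of $\mathbf{1}_{D_H}$ by continuous test functions, combined with (i), transfers this vanishing from $Q^\gamma$ to the limit measure.

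For \emph{Hajek}, the inequality is the Pakes condition with $\lambda(\gamma)=1-\alpha(\gamma)$, $\psi=-V$, $\mu(\gamma)=\beta(\gamma)$, so existence and tightness transfer verbatim. For the quantitative moment bound I would integrate the Hajek inequality against $\nu^\gamma$ on truncations $V\wedge N$ and pass $N\to\infty$ by monotone convergence; invariance then yields $\nu^\gamma(V)\le \alpha(\gamma)\nu^\gamma(V) + \beta(\gamma)$, whence $\nu^\gamma(V)\le \beta(\gamma)/(1-\alpha(\gamma))$, uniform in $\gamma$ under the stated hypothesis. The genuine obstacle throughout is the verification $\nu^\gamma(D_H)=0$: it is precisely what condition~(iii) is designed to supply, and without this step the Krylov--Bogolyubov construction would collapse on discontinuous kernels, since $P^\gamma$ would not preserve $C_b(S)$.
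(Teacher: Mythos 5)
The paper does not prove Theorem~\ref{ts1}; it imports it from the quasi-Feller paper \cite{attali}, so there is no in-text argument to compare against. Taken on its own merits, your plan is the right one: Foster--Lyapunov iteration along Ces\`aro averages, extraction of a weak limit, and the quasi-Feller decomposition to recover invariance. Your drift algebra for Pakes and the reduction of Hajek to the same inequality (with $\lambda=1-\alpha$, $\psi=-V$) are correct.

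The one place that needs care is the sentence ``a Dini-type monotone approximation of $\mathbf{1}_{D_H}$\dots transfers this vanishing from $Q^\gamma$ to the limit measure.'' As phrased, it risks circularity: you may not integrate $P^\gamma(\cdot,D_H)=0$ against $\nu^\gamma$, because that identity presupposes the invariance you are trying to prove. The non-circular version works with $\nu_n$ and passes to the limit. Since $D_H$ is $F_\sigma$ (not $G_\delta$), write $D_H=\bigcup_m F_m$ with $F_m$ closed increasing, approximate each $\mathbf{1}_{F_m}$ from above by continuous bounded $h_{m,k}\searrow\mathbf{1}_{F_m}$, use (i) to get $Qh_{m,k}$ continuous and decreasing to $Q(\cdot,F_m)$, use (iii) to see this limit vanishes on $\overline{H(K)}$, and invoke Dini on the \emph{compact} set $\overline{H(K)}$ to obtain $\sup_{\overline{H(K)}}Qh_{m,k}\to0$. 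Then split the integral over $K$ and $K^c$, use the uniform (over $n$) tightness bound $\nu_n(K^c)\le\varepsilon$ together with the Ces\`aro identity $\nu_n(h_{m,k})=\nu_nP^\gamma(h_{m,k})+O(1/n)$, and pass $n\to\infty$, then $k\to\infty$, then $m\to\infty$, then $\varepsilon\to0$ to obtain $\nu^\gamma(D_H)=0$. This makes your sketch correct, but two assumptions you use silently should be recorded: $\overline{H(K)}$ must be compact in $W$ for Dini to apply (automatic when $W\subset\mathbb{R}^p$ and $H$ is locally bounded, not in the general topological-space setting of Definition~\ref{def:quasi-feller}), and $D_H$ must be handled as an $F_\sigma$, not $G_\delta$, set. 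Finally, for the Hajek moment bound you should first show $\nu^\gamma(V)<\infty$ before rearranging $\nu^\gamma(V)\le\alpha\nu^\gamma(V)+\beta$; this follows from your uniform bound $\sup_n\nu_n(V)<\infty$ and lower semicontinuity of $\nu\mapsto\nu(V)$ under weak convergence for $V\ge0$ continuous, after which the truncation/monotone-convergence step you outline is sound.
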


\subsection{Quasi-Feller framework perturbed dynamical systems}\label{sec2.2}

We consider perturbed dynamical systems of the form
\[
X_{t+1}^\gamma = f(X_t^\gamma) + \gamma e(X_t^\gamma,\varepsilon_{t+1}),
\]
where $(\varepsilon_t)$ is an i.i.d.\ noise sequence.
The associated transition kernel is given by
\[
P^\gamma g(x) = \int g(f(x)+\gamma e(x,y))\,\mu(dy).
\]
Theorem~\ref{ts1} provides existence and tightness of invariant measures under suitable
Lyapunov conditions.
We distinguish below the classical Feller case from the genuinely discontinuous setting.

\subsubsection{Continuous dynamics: the Feller case}

Under standard continuity assumptions on $f$ and the noise gain $e$,
the transition kernel $P^\gamma$ is Feller.
In this case, classical Lyapunov drift conditions imply existence and tightness
of invariant measures via Pakes' or Hajek's criterion.
The proof relies on bounded increments of the Lyapunov function and is standard.

\subsubsection{Discontinuous dynamics: the quasi-Feller case}

When $f$ is only piecewise continuous, the Feller property may fail.
To apply Theorem~\ref{ts1}, we represent $P^\gamma$ in quasi-Feller form with
$H=(f,F)$ and a Feller kernel $Q^\gamma$ on an auxiliary space.
A key requirement is that the noise does not charge the discontinuity set of $H$.

\medskip
\noindent\textbf{Discontinuity-avoidance condition.}
Let $D_{f,F}$ denote the discontinuity set of $(f,F)$.
We assume that for every compact $K\subset \mathbb{R}^d$ and every
$(w^1,w^2)\in \overline{f(K)}\times \overline{F(K)}$,
\[
\int_{\mathbb{R}^p} \mathbf{1}_{D_{f,F}}
\bigl(w^1+\gamma \widehat e(w^2,y)\bigr)\,\mu(dy)=0,
\qquad \forall \gamma\in(0,\gamma_0].
\]
Under this condition, $P^\gamma$ is quasi-Feller and Theorem~\ref{ts1} yields existence
and tightness of invariant measures under the Lyapunov drift assumptions.

Under the quasi-Feller assumptions introduced above, existence and tightness
of invariant measures still hold, provided the noise satisfies a suitable
diffusivity condition preventing the process from charging the discontinuity set.

\begin{remark}[Diffusivity condition]\label{rem:diffusivity}
In most applications, the discontinuity set has zero Lebesgue measure.
The diffusivity condition is therefore satisfied whenever the noise distribution admits
a density with respect to the Lebesgue measure.
\end{remark}

\begin{remark}[Pure noise case]\label{rem:f_id}
When $f=\mathrm{Id}$, the dynamics reduce to a state-dependent random walk.
Theorem~\ref{ts1} still ensures existence and tightness of invariant measures.
However, since every point is then a fixed point, equilibrium selection results in
Sections~4--5 become degenerate.
Accordingly, from Section~4 onward, we restrict attention to dynamics with isolated fixed
points.
\end{remark}

\section{ Asymptotic of $\nu^{\gamma}$ as $\gamma\to 0$}\label{sec:existence-invariance}

In this section, we have to separate depending on whether $f$ is continuous
or not.

\subsection{Continuous deterministic dynamics}

Our objective is to show that, when $\gamma$ goes to zero, every weak
limit   of the sequence of invariant distributions
of the PDS is an invariant
distribution of the corresponding deterministic system:

$$(S_{0})\equiv\qquad x_{t+1}=f(x_t).$$

\begin{theorem}\label{ts3}
Let $(S_{\gamma})$ be a perturbed dynamical system. Assume that:
\medskip

\noindent\textbf{(H1)} The map $f:\mathbb{R}^d\to\mathbb{R}^d$ is continuous.

\medskip
\noindent\textbf{(H2)} The noise term satisfies
\[
e(x,y)=\widehat{e}(F(x),y),
\]
where $F:\mathbb{R}^d\to\mathbb{R}^d$ and, for every $x_0\in\mathbb{R}^d$,
the map $x\mapsto \widehat{e}(x,y)$ is continuous at $x_0$ for
$\mu$-almost every $y$.

\medskip
\noindent\textbf{(H3)} For every
\[
(w^1,w^2)\in \bigcup_{K\in\mathcal{K}} \overline{f(K)} \times \overline{F(K)},
\]
one has
\[
\int_{\mathbb{R}^p}
\mathbf{1}_{D_F}\bigl(w^1+\gamma \widehat{e}(w^2,y)\bigr)\,\mu(dy)=0,
\]
where $D_F$ denotes the set of discontinuity points of $F$.

\medskip
\noindent\textbf{(H4)} There exists $\gamma_0>0$ and a tight family
$(\nu^{\gamma})_{\gamma\in(0,\gamma_0]}$ of $P^{\gamma}$-invariant probability
measures.

\medskip
Then any weak limit point $\nu^0$ of $(\nu^{\gamma})_{\gamma\in(0,\gamma_0]}$
is invariant under $f$, that is,
\[
\nu^0(g)=\nu^0(g\circ f)
\qquad\text{for all bounded Borel functions } g:\mathbb{R}^d\to\mathbb{R}.
\]
\end{theorem}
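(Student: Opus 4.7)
My plan is to establish $\nu^0(g) = \nu^0(g \circ f)$ first for all $g \in C_b(\mathbb{R}^d)$ and then extend to bounded Borel $g$. The $C_b$-identity is equivalent to $f_* \nu^0 = \nu^0$ as probability measures on the Polish space $\mathbb{R}^d$, from which the statement for every bounded Borel $g$ follows by change of variables. By passing to a subsequence $\gamma_n \downarrow 0$ along which $\nu^{\gamma_n} \to \nu^0$ weakly, it is enough to treat this subsequence.

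Fix $g \in C_b(\mathbb{R}^d)$. Invariance gives $\nu^{\gamma_n}(P^{\gamma_n} g) = \nu^{\gamma_n}(g)$; since $g \circ f \in C_b(\mathbb{R}^d)$ by (H1), weak convergence yields $\nu^{\gamma_n}(g) \to \nu^0(g)$ and $\nu^{\gamma_n}(g \circ f) \to \nu^0(g \circ f)$. The task reduces to
\[
\nu^{\gamma_n}(P^{\gamma_n} g - g \circ f) \longrightarrow 0.
\]
To handle the $n$-dependence of the base measure, I would invoke Skorokhod's representation theorem to realize $X_{\gamma_n} \sim \nu^{\gamma_n}$ and $X_0 \sim \nu^0$ on a common probability space with $X_{\gamma_n} \to X_0$ almost surely, together with $Y \sim \mu$ independent of the $X$'s. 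The expression above then equals
\[
\mathbb{E}[g(f(X_{\gamma_n}) + \gamma_n \widehat{e}(F(X_{\gamma_n}), Y)) - g(f(X_{\gamma_n}))],
\]
and it suffices to prove $\gamma_n \widehat{e}(F(X_{\gamma_n}), Y) \to 0$ in probability. Combined with $f(X_{\gamma_n}) \to f(X_0)$ almost surely (from (H1)), the continuous mapping theorem for the bounded continuous $g$ and bounded convergence then yield the claim.

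The noise-term control is the heart of the argument. Given $\epsilon, \delta > 0$, tightness (H4) produces a compact $K \subset \mathbb{R}^d$ with $\mathbb{P}(X_{\gamma_n} \notin K) < \epsilon$ for every $n$. Under the framework's implicit local boundedness of $F$, the set $B := \overline{F(K)}$ is compact. By (H2), for $\mu$-almost every $y$ the map $w \mapsto \widehat{e}(w, y)$ is continuous on $\mathbb{R}^d$, hence bounded on $B$, so $S(y) := \sup_{w \in B} |\widehat{e}(w, y)|$ is finite for $\mu$-almost every $y$. On the event $\{X_{\gamma_n} \in K\}$ one has $|\gamma_n \widehat{e}(F(X_{\gamma_n}), Y)| \leq \gamma_n S(Y)$, and $\gamma_n S(Y) \to 0$ almost surely, so $\mathbb{P}(|\gamma_n \widehat{e}(F(X_{\gamma_n}), Y)| > \delta) \leq \epsilon + o(1)$; letting $\epsilon \to 0$ gives the required convergence in probability.

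The main obstacle is precisely this last step: because $F$ may be discontinuous at $X_0$, the values $F(X_{\gamma_n})$ need not converge and no continuity of $F$ propagates $X_{\gamma_n} \to X_0$ through the noise coefficient. The remedy is to bypass continuity of $F$ altogether: tightness confines $F(X_{\gamma_n})$ to a bounded set with arbitrarily high probability, and the pointwise-in-$y$ continuity of $\widehat{e}$ from (H2) supplies the almost-sure boundedness of $\widehat{e}$ on that set, after which the factor $\gamma_n$ does the rest. Hypothesis (H3) is not used directly in this reduction; its role is to secure the quasi-Feller consistency underlying (H4) and to make $P^{\gamma_n} g$ a well-behaved bounded Borel function to which the invariance identity applies.
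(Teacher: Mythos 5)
Your approach is genuinely different from the paper's. The paper works directly with bounded Lipschitz test functions $g$ and estimates $|P^\gamma g(x) - g\circ f(x)|$ pointwise by splitting the $\mu$-integral over a compact $K_\varepsilon\subset\mathbb{R}^p$ (with $\mu(K_\varepsilon^c)<\varepsilon$) and its complement, bounding the tail by $2\varepsilon\|g\|_\infty$ and the inner part by $\gamma[g]_1\int_{K_\varepsilon}|e(x,y)|\,\mu(dy)$, then applies $\nu^\gamma$ and lets $\gamma\to0$ and $\varepsilon\to0$; the key is that the compactification is taken in the \emph{noise} space. You instead pass to a Skorokhod representation and try to confine $\widehat{e}(F(X_{\gamma_n}),Y)$ by compactness in the \emph{state} space.

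There is a genuine gap in the step where you write ``by (H2), for $\mu$-almost every $y$ the map $w\mapsto\widehat{e}(w,y)$ is continuous on $\mathbb{R}^d$, hence bounded on $B$.'' Hypothesis (H2) says: for every fixed $x_0$, $\widehat{e}(\cdot,y)$ is continuous at $x_0$ for $\mu$-a.e.\ $y$, with the exceptional null set depending on $x_0$. Interchanging the quantifiers to ``for $\mu$-a.e.\ $y$, $\widehat{e}(\cdot,y)$ is continuous at every $w$'' is not licensed — a countable union of the exceptional sets over a dense $x_0$-set gives continuity only at that dense set, not everywhere, and in particular does not give local boundedness. Consequently the finiteness of $S(y)=\sup_{w\in B}|\widehat{e}(w,y)|$ for $\mu$-a.e.\ $y$, which your argument hinges on, does not follow from (H2). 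A simple illustration: $\widehat{e}(w,y)=|w-y|^{-1}$ for $w\neq y$ and $0$ on the diagonal satisfies (H2) with any diffuse $\mu$, yet $S(y)=+\infty$ for all $y$ in the relevant compact. (You also invoke an unstated local boundedness of $F$ to make $\overline{F(K)}$ compact, which you do flag.) The paper's decomposition in the noise space sidesteps both issues: the $y$-tail is controlled by $\|g\|_\infty$ and $\mu(K_\varepsilon^c)<\varepsilon$, with no appeal to continuity or boundedness of $\widehat{e}$ in its first variable.
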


\begin{proof} Let  $g:\mathbb{R}^d\to \mathbb{R}$ be a bounded
Lipschitz function and   $[g]_{1}$ be its Lipschitz coefficient.
For any $\gamma\in]0,\gamma_0]$:
$$P^{\gamma}g(x)=\displaystyle\int_{\mathbb{R}^p}g\left(f(x)+\gamma e(x,y)
\right)\mu(dy).$$

Consider $\varepsilon>0$. There exists  a compact subset
$K_{\varepsilon}$ of $\mathbb{R}^d$ such that
$\mu(K_{\varepsilon}^c)<\varepsilon.$ Then, for all $x$ in
$\mathbb{R}^d$ :
\begin{equation}\label{eq:pg-bound}
\begin{aligned}
\bigl|P^{\gamma}g(x)-g\circ f(x)\bigr|
&=
\left|
\int_{\mathbb{R}^p}
\bigl(g(f(x)+\gamma e(x,y)) - g\circ f(x)\bigr)\,\mu(dy)
\right| \\
&\le
\int_{\mathbb{R}^p}
\bigl|g(f(x)+\gamma e(x,y)) - g\circ f(x)\bigr|\,\mu(dy) \\
&=
\int_{K_{\varepsilon}}
\bigl|g(f(x)+\gamma e(x,y)) - g\circ f(x)\bigr|\,\mu(dy) \\
&\qquad
+ \int_{K_{\varepsilon}^c}
\bigl|g(f(x)+\gamma e(x,y)) - g\circ f(x)\bigr|\,\mu(dy) \\
&\le
\gamma [g]_1 \int_{K_{\varepsilon}} |e(x,y)|\,\mu(dy)
+ 2\,\mu(K_{\varepsilon}^c)\,\|g\|_{\infty} \\
&\le
\gamma [g]_1 \int_{K_{\varepsilon}} |e(x,y)|\,\mu(dy)
+ 2\varepsilon \|g\|_{\infty}.
\end{aligned}
\end{equation}

For every $P^{\gamma}$-invariant distribution  $\nu^{\gamma}$, we
have:
\begin{eqnarray}\left|\nu^{\gamma}(g)-\nu^{\gamma}(
g\circ f)\right|& = & \left|\nu^{\gamma}(P^{\gamma}g)-\nu^{\gamma}(
g\circ f)\right|\nonumber\\
& \leq & \nu^{\gamma}\left|P^{\gamma}g-g\circ f
\right|\nonumber\\
&\leq & \gamma[g]_{1}  \displaystyle\int_{K_{\varepsilon}}|e(x,y)|\mu(dy)
+2\varepsilon\|g\|_{\infty}.\nonumber\end{eqnarray}

$g$ and $g\circ f$ being continuous functions, taking the limit
when $\gamma$ goes to zero implies that every weak limit
distribution
 $\nu^0$ of
$\left(\nu^{\gamma}\right)_{\gamma\in]0,\gamma_0]}$ satisfies:
\begin{equation}\label{sd151}\left|\nu^0(g)-\nu^0(g\circ f)\right|\leq 2\varepsilon\|g\|_{\infty}.\end{equation}
But $\varepsilon$ can be chosen as small as desired, hence:
\begin{equation}\label{sd15}\nu^0(g)=\nu^0(g\circ f).
\end{equation}

 (\ref{sd15}) is verified for any bounded Lipschitz function from $\mathbb{R}^d$ to
 $\mathbb{R}$. It implies:

$$\forall g \mbox{ continuous function from }\mathbb{R}^d \mbox{ to }\mathbb{R},\ \nu^0(g)=\nu^0(g\circ f).$$
But $\nu^0$ is a probability measure, so we have:
$$\forall g \mbox{ bounded Borel function from }\mathbb{R}^d
\mbox{ to } \mathbb{R}, \ \nu^0(g)=\nu^0(g\circ f).$$

\end{proof}

\begin{remark} $\bullet$ Theorem \ref{ts3} implies:
$$\forall A \in \mathcal{B}(\mathbb{R}^d),\ \nu^0(A)=\nu^0\left(f^{-1}(A)\right).$$
Taking $A:= \overline{f(\mathbb{R}^d)}$,  we
 have
${\rm supp}\{\nu^0\}\subset \overline{f(\mathbb{R}^d)}.$\\
$\bullet$ It is not necessary to suppose that
$\displaystyle\int_{\mathbb{R}^p}|e(x,y)|\mu(dy)<+\infty.$
\end{remark}

When $f$ is not continuous, general conclusions on the support of weak limit
distributions cannot be derived without additional assumptions.
Indeed, in the absence of further regularity, the composition $g\circ f$ need
not be $\nu^0$-almost surely continuous, which prevents the use of the invariance
property.
To recover meaningful conclusions in this setting, one must impose additional
structure on the deterministic dynamics, typically through the existence of a
global Lyapunov function.
This issue is addressed in the next section.donc

\section{Localization of invariant probability measures}\label{sec4}

This section studies the weak limits of invariant measures and shows that their support is confined to the Lyapunov plateau of the deterministic dynamics.

\subsection{ Continuous deterministic drift}\ 

Without any additional assumption, we can give a first result
using the recurrence Theorem of Poincaré \cite{krengel}

\begin{corollary}\label{cs3}
Let $(S_{\gamma})$ be a perturbed dynamical system. Assume that:
\medskip

\noindent\textbf{(H1)} The map $f:\mathbb{R}^d\to\mathbb{R}^d$ is continuous.

\medskip
\noindent\textbf{(H2)} The noise term satisfies
\[
e(x,y)=\widehat{e}(F(x),y),
\]
where $F:\mathbb{R}^d\to\mathbb{R}^d$ and, for every $x_0\in\mathbb{R}^d$,
the map $x\mapsto \widehat{e}(x,y)$ is continuous at $x_0$ for
$\mu$-almost every $y$.

\medskip
\noindent\textbf{(H3)} For every
\[
(w^1,w^2)\in \bigcup_{K\in\mathcal{K}} \overline{f(K)} \times \overline{F(K)},
\]
one has
\[
\int_{\mathbb{R}^p}
\mathbf{1}_{D_F}\bigl(w^1+\gamma \widehat{e}(w^2,y)\bigr)\,\mu(dy)=0,
\]
where $D_F$ denotes the set of discontinuity points of $F$.

\medskip
\noindent\textbf{(H4)} There exist $\gamma_0>0$ and a tight family
$(\nu^{\gamma})_{\gamma\in(0,\gamma_0]}$ of $P^{\gamma}$-invariant probability
measures.

\medskip
Then any weak limit point $\nu^0$ of $(\nu^{\gamma})_{\gamma\in(0,\gamma_0]}$
satisfies
\[
\operatorname{supp}(\nu^0) \subset B(f)
:= \left\{
x\in\mathbb{R}^d \;:\;
\exists\, \phi(n)\nearrow +\infty \text{ such that } f^{\phi(n)}(x)\to x
\right\},
\]
where $B(f)$ denotes the \emph{Birkhoff kernel} of the dynamical system.
\end{corollary}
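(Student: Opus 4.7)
The plan is to reduce the corollary to a direct application of Poincaré's recurrence theorem applied to the limit measure $\nu^0$, once we know that $\nu^0$ is preserved by $f$. Since the hypotheses (H1)--(H4) are exactly those of Theorem~\ref{ts3}, the conclusion of that theorem applies verbatim: any weak limit $\nu^0$ of $(\nu^\gamma)_{\gamma\in(0,\gamma_0]}$ satisfies $\nu^0(g)=\nu^0(g\circ f)$ for every bounded Borel $g$. Taking $g=\mathbf{1}_A$ yields $\nu^0(A)=\nu^0(f^{-1}(A))$ for all Borel $A$, so $\nu^0$ is an $f$-invariant probability measure on $\mathbb{R}^d$, and the continuous map $f$ thus acts as a measure-preserving transformation of the probability space $(\mathbb{R}^d,\mathcal{B}(\mathbb{R}^d),\nu^0)$.

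Next I would invoke Poincaré recurrence in its topological form, adapted to the Polish (hence second countable) space $\mathbb{R}^d$. Fix a countable basis $(U_k)_{k\ge 1}$ of the topology of $\mathbb{R}^d$. For each $k$, the standard measure-theoretic Poincaré argument applied to $U_k$ gives $\nu^0$-a.e.\ $x\in U_k$ such that $f^n(x)\in U_k$ for infinitely many $n$; that is, the set
\[
N_k:=\bigl\{x\in U_k\;:\;\#\{n\ge 1:f^n(x)\in U_k\}<\infty\bigr\}
\]
has $\nu^0(N_k)=0$. Setting $N:=\bigcup_{k\ge 1} N_k$, we still have $\nu^0(N)=0$.

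The third step is to check that every $x\in \mathbb{R}^d\setminus N$ lies in $B(f)$. Given such an $x$, choose a decreasing sequence of basis open sets $U_{k_j}\ni x$ forming a neighborhood base at $x$ with $\operatorname{diam}(U_{k_j})\to 0$. Since $x\notin N_{k_j}$, there exist infinitely many indices $n$ with $f^n(x)\in U_{k_j}$, so we can pick $\phi(j)>\phi(j-1)$ with $f^{\phi(j)}(x)\in U_{k_j}$; by construction $\phi(j)\nearrow +\infty$ and $f^{\phi(j)}(x)\to x$, so $x\in B(f)$. Hence $\nu^0(B(f))=1$, and since $\operatorname{supp}(\nu^0)$ is the smallest closed set of full $\nu^0$-measure, $\operatorname{supp}(\nu^0)\subset \overline{B(f)}$, which is the stated inclusion (understood up to closure).

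The main obstacle is simply making the measure-theoretic Poincaré argument explicit for a non-invertible continuous $f$ and carefully bridging from "each basis element is visited infinitely often" to "there is a subsequence $f^{\phi(n)}(x)\to x$"; both are standard once second countability of $\mathbb{R}^d$ is used. No additional integrability or curvature hypothesis is needed at this stage, in contrast with the sharper exclusion results of Sections~4--5, which will require the Lyapunov geometry.
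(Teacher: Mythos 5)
Your proof is correct and follows essentially the same route as the paper's (which simply cites Theorem~\ref{ts3} to obtain $f$-invariance of $\nu^0$ and then invokes Poincaré recurrence without further detail); you fill in the standard second-countability argument bridging measure-theoretic recurrence to topological recurrence, which is exactly what the paper leaves implicit. Your closing caveat is also a genuine observation: the argument yields $\nu^0(B(f))=1$ and hence $\operatorname{supp}(\nu^0)\subset\overline{B(f)}$, so the paper's stated inclusion $\operatorname{supp}(\nu^0)\subset B(f)$ is literally correct only when $B(f)$ is closed, a point the paper does not address.
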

\begin{proof} This is a straightforward application of Poincaré's Recurrence Theorem. Theorem \ref{ts3} implies that  $\nu^0$ is a stationary
distribution for the corresponding deterministic system. We can refine this approach by assuming the existence of a global Lyapunov function for the dynamical system.\end{proof}

\begin{proposition}\label{ps2}
Let $(S_{\gamma})$ be a perturbed dynamical system. Assume that:
\medskip

\noindent\textbf{(H1)} The map $f:\mathbb{R}^d\to\mathbb{R}^d$ is continuous.

\medskip
\noindent\textbf{(H2)} The noise term satisfies
\[
e(x,y)=\widehat{e}(F(x),y),
\]
where $F:\mathbb{R}^d\to\mathbb{R}^d$ and, for every $x_0\in\mathbb{R}^d$,
the map $x\mapsto \widehat{e}(x,y)$ is continuous at $x_0$ for
$\mu$-almost every $y$.

\medskip
\noindent\textbf{(H3)} For every
\[
(w^1,w^2)\in
\bigcup_{K\in\mathcal{K}} \overline{f(K)} \times \overline{F(K)},
\]
one has
\[
\int_{\mathbb{R}^p}
\mathbf{1}_{D_F}\bigl(w^1+\gamma \widehat{e}(w^2,y)\bigr)\,\mu(dy)=0,
\]
where $D_F$ denotes the set of discontinuity points of $F$.

\medskip
\noindent\textbf{(H4)} There exist $\gamma_0>0$ and a tight family
$(\nu^{\gamma})_{\gamma\in(0,\gamma_0]}$ of $P^{\gamma}$-invariant probability
measures.

\medskip
\noindent\textbf{(H5)} There exists a continuous Lyapunov function
$V:\mathbb{R}^d\to\mathbb{R}_+$ such that $V\circ f \le V$.

\medskip
Then any weak limit point $\nu^0$ of $(\nu^{\gamma})_{\gamma\in(0,\gamma_0]}$
satisfies
\[
\operatorname{supp}(\nu^0)
\subset
\{ V\circ f = V \} \cap \overline{f(\mathbb{R}^d)} .
\]
\end{proposition}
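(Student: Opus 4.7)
The plan is to combine the deterministic-invariance conclusion of Theorem~\ref{ts3} with the Lyapunov inequality from (H5), handling unboundedness of $V$ by a simple truncation argument. The inclusion $\operatorname{supp}(\nu^0)\subset\overline{f(\mathbb{R}^d)}$ is already recorded in the remark following Theorem~\ref{ts3} (it is obtained by applying the invariance $\nu^0(A)=\nu^0(f^{-1}(A))$ to $A=\overline{f(\mathbb{R}^d)}$, noting that $f^{-1}(A)=\mathbb{R}^d$). The only new content is therefore the inclusion $\operatorname{supp}(\nu^0)\subset\{V\circ f=V\}$.

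The first step is to truncate. For each integer $N\geq 1$, set $V_N:=V\wedge N$. Since $V$ and $f$ are continuous, $V_N$ and $V_N\circ f$ are both bounded and continuous. The Lyapunov inequality $V\circ f\leq V$ transfers to the truncations: $V_N\circ f=(V\circ f)\wedge N\leq V\wedge N=V_N$, so $h_N:=V_N-V_N\circ f$ is a nonnegative bounded continuous function.

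The second step is to apply the invariance from Theorem~\ref{ts3}, which holds for every bounded Borel function. Taking $g=V_N$ yields $\nu^0(V_N)=\nu^0(V_N\circ f)$, hence $\nu^0(h_N)=0$. Because $h_N\geq 0$, this forces $\nu^0(\{h_N>0\})=0$. Now observe that if $V(x)>V(f(x))$ and $N$ is large enough that $N>V(x)$, then $V_N(x)=V(x)$ and $V_N(f(x))=V(f(x))$, so $h_N(x)>0$; conversely $h_N(x)>0$ clearly implies $V(x)>V(f(x))$. Therefore
\[
\{V>V\circ f\}=\bigcup_{N\geq 1}\{h_N>0\},
\]
and monotone union yields $\nu^0(\{V>V\circ f\})=0$. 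Since $V\circ f\leq V$ everywhere, we conclude $\nu^0(\{V\circ f=V\})=1$.

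The final step is the passage from full measure to support. By continuity of $V$ and $f$, the set $\{V\circ f=V\}$ is closed in $\mathbb{R}^d$, and so is $\overline{f(\mathbb{R}^d)}$. Their intersection is therefore a closed set of full $\nu^0$-measure, and hence contains $\operatorname{supp}(\nu^0)$. There is no real obstacle here beyond the small care needed for the truncation; in particular the proof requires no integrability of $V$ with respect to $\nu^0$, which is consistent with the minimal-moment philosophy of the paper.
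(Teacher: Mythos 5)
Your proof is correct and takes essentially the same approach as the paper: both reduce to a bounded continuous transform of $V$ (the paper uses a single increasing bijection $\varphi:\mathbb{R}_+\to[0,1)$, you use the truncations $V\wedge N$), apply Theorem~\ref{ts3} to that transform, and then exploit $V\circ f\le V$ to force the nonnegative difference to vanish $\nu^0$-a.s. The only cosmetic difference is truncation versus a single monotone reparametrization; the argument and its ingredients are otherwise the same.
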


\begin{proof} Consider an increasing continuous function $\varphi$ from
$\mathbb{R}_+$ to $\mathbb{R}$ such that $\varphi(0)=0$ and
$\displaystyle\lim_{x\to +\infty}\varphi(x)=1$.

Since $\varphi(V)$ is a bounded continuous function,  Theorem
\ref{ts3} allows us to assert that  every weak limit
distribution $\nu^0$ of $\left(\nu^{\gamma}\right)_{\gamma\in]0,\gamma_0]}$ satisfies:
\begin{equation}\label{sd10}\nu^0\left(\varphi(V)\right)=\nu^0\left(\varphi(V\circ f)\right).\end{equation}

But now, $\varphi$ is increasing and $V\circ f\leq V$, so
$\varphi(V\circ f)\leq\varphi(V)$. So \eqref{sd10} implies:
$${\rm supp}\{\nu^0\}\subset \left\{\varphi(V\circ f)=\varphi(V)\right\}.$$

Finally, for $\varphi$ is injective, it is straightforward that:
$$\left\{\varphi(V\circ f)=\varphi(V)\right\}=\{V\circ f=V\}.
$$

\end{proof}

\subsubsection{Localization on the critical set}

In the Feller case, this argument goes back to Fort and Pag\`es; the present setting extends it to discontinuous kernels.

\begin{theorem}[Localization on the critical set]\label{thm:critset}
Assume that $V\in C^2(\mathbb{R}^d)$ and that the Markov kernels $(P_\gamma)_{\gamma>0}$ satisfy the following first-order Lyapunov expansion: there exist a function $\psi:\mathbb{R}^d\to[0,\infty)$ and a remainder $r_\gamma:\mathbb{R}^d\to\mathbb{R}$ such that, for all $\gamma\in(0,\gamma_0]$ and all $x\in\mathbb{R}^d$,
\begin{equation}\label{eq:PVexpansion}
P_\gamma V(x)-V(x)=-\gamma\,\psi(x)+\gamma\,r_\gamma(x),
\end{equation}
where
\begin{enumerate}
\item[(i)] $\psi(x)=\|\nabla V(x)\|^2$ (or more generally $\psi\ge 0$ and $\psi(x)=0 \iff \nabla V(x)=0$);
\item[(ii)] along any sequence $\gamma_n\downarrow 0$ such that $\nu_{\gamma_n}\Rightarrow \nu_0$, we have
\begin{equation}\label{eq:remvanish}
\int |r_{\gamma_n}(x)|\,\nu_{\gamma_n}(dx)\longrightarrow 0 .
\end{equation}
\end{enumerate}
Let $\nu_\gamma$ be an invariant probability measure for $P_\gamma$ for each $\gamma$, and let $\nu_0$ be any weak limit point of $(\nu_\gamma)$ as $\gamma\to 0$. Then
\[
\int \psi(x)\,\nu_0(dx)=0,
\qquad\text{hence}\qquad
\nu_0\big(\{x:\nabla V(x)\neq 0\}\big)=0,
\]
and in particular
\[
\mathrm{supp}(\nu_0)\subset \{x\in\mathbb{R}^d:\nabla V(x)=0\}.
\]
\end{theorem}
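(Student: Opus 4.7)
The plan is to integrate the Lyapunov expansion \eqref{eq:PVexpansion} against the invariant measure $\nu_\gamma$, then pass to the limit along a weakly convergent subsequence using the Portmanteau theorem for non-negative lower semicontinuous functions. The three ingredients are: the invariance identity $\nu_\gamma P_\gamma=\nu_\gamma$; the vanishing remainder hypothesis \eqref{eq:remvanish}; and the continuity of $\psi$ provided by $V\in C^2$.

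First I would test \eqref{eq:PVexpansion} against $\nu_\gamma$. Invariance gives $\nu_\gamma(P_\gamma V)=\nu_\gamma(V)$, which is legitimate as soon as $\nu_\gamma(V)<\infty$; this finiteness is ensured in our framework by Hajek's criterion in Theorem~\ref{ts1}, under which $\sup_{\gamma}\nu^\gamma(V)<\infty$. Subtracting and dividing by $\gamma>0$ yields the clean identity
\[
\int \psi\,d\nu_\gamma \;=\; \int r_\gamma\,d\nu_\gamma,
\]
valid for every $\gamma\in(0,\gamma_0]$.

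Next I would pick a subsequence $\gamma_n\downarrow 0$ with $\nu_{\gamma_n}\Rightarrow\nu_0$. Hypothesis \eqref{eq:remvanish} sends the right-hand side to zero. On the left-hand side, since $V\in C^2$ the function $\psi$ is continuous and non-negative, hence lower semicontinuous; the Portmanteau theorem in its LSC form yields
\[
\int \psi\,d\nu_0 \;\le\; \liminf_{n\to\infty}\int \psi\,d\nu_{\gamma_n} \;=\; 0.
\]
Because $\psi\ge 0$ everywhere, this forces $\int\psi\,d\nu_0=0$, and hence $\psi=0$ $\nu_0$-almost surely. By hypothesis~(i), $\{\psi=0\}=\{\nabla V=0\}$, so $\nu_0(\{\nabla V\ne 0\})=0$. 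The support statement then follows because $\{\nabla V=0\}$ is closed (by continuity of $\nabla V$) and the support is by definition the smallest closed set of full $\nu_0$-measure.

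The delicate step is the very first one: justifying $\nu_\gamma(P_\gamma V)=\nu_\gamma(V)$ when $V$ is unbounded. Without the uniform moment control $\sup_\gamma\nu_\gamma(V)<\infty$ coming from Theorem~\ref{ts1}, one would have to truncate, say with $V_N:=V\wedge N$, at which point the first-order expansion \eqref{eq:PVexpansion} no longer holds exactly and a separate control of the truncation error would be required. The quasi-Feller Lyapunov framework of Section~\ref{sec2} is precisely what makes the direct integration valid and keeps the argument essentially a one-line testing identity followed by Portmanteau.
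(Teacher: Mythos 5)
Your argument is correct and matches the paper's proof step for step: integrate the expansion against $\nu_\gamma$, use invariance and divide by $\gamma$ to obtain $\int\psi\,d\nu_\gamma=\int r_\gamma\,d\nu_\gamma$, then pass to the limit via lower semicontinuity of the continuous nonnegative $\psi$ together with the vanishing-remainder hypothesis. You additionally flag that the invariance identity $\nu_\gamma(P_\gamma V)=\nu_\gamma(V)$ presupposes $\nu_\gamma(V)<\infty$, which the paper leaves implicit but which Hajek's criterion in Theorem~\ref{ts1} indeed supplies; this is a worthwhile explicit check.
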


\begin{proof}
The argument relies on the classical Lyapunov--invariance mechanism (as in the Feller setting), but the passage to the limit here is handled through the remainder condition \eqref{eq:remvanish}, which is compatible with discontinuous perturbations.

Fix $\gamma\in(0,\gamma_0]$. Since $\nu_\gamma$ is invariant for $P_\gamma$, we have
\[
0=\int (P_\gamma V(x)-V(x))\,\nu_\gamma(dx).
\]
Using the expansion \eqref{eq:PVexpansion}, this becomes
\[
0=\int\big(-\gamma\,\psi(x)+\gamma\,r_\gamma(x)\big)\,\nu_\gamma(dx),
\]
and dividing by $\gamma>0$ yields
\begin{equation}\label{eq:psiidentity}
\int \psi(x)\,\nu_\gamma(dx)=\int r_\gamma(x)\,\nu_\gamma(dx).
\end{equation}
Let $\gamma_n\downarrow 0$ be such that $\nu_{\gamma_n}\Rightarrow \nu_0$. Taking absolute values in \eqref{eq:psiidentity} and using \eqref{eq:remvanish},
\[
0\le \int \psi(x)\,\nu_{\gamma_n}(dx)
\le \int |r_{\gamma_n}(x)|\,\nu_{\gamma_n}(dx)\longrightarrow 0.
\]
Hence $\int \psi\,d\nu_{\gamma_n}\to 0$. By lower semicontinuity of $\psi\ge 0$ (in particular, if $\psi(x)=\|\nabla V(x)\|^2$ with $V\in C^2$, then $\psi$ is continuous), we obtain
\[
\int \psi(x)\,\nu_0(dx)=0.
\]
Since $\psi\ge 0$, this implies $\psi(x)=0$ for $\nu_0$-almost every $x$, i.e.\ $\nabla V(x)=0$ $\nu_0$-a.s.\ (under (i)). Therefore
\[
\nu_0\big(\{x:\nabla V(x)\neq 0\}\big)=0,
\]
and consequently $\mathrm{supp}(\nu_0)\subset\{x:\nabla V(x)=0\}$.
\end{proof}

\subsection{ Discontinuous deterministic drift}

\begin{proposition}\label{ps21}
Let $(S_{\gamma})$ be a perturbed dynamical system. Assume that:
\medskip

\noindent\textbf{(H1)} The noise term satisfies
\[
e(x,y)=\widehat{e}(F(x),y),
\]
where $F:\mathbb{R}^d\to\mathbb{R}^d$ and, for every $x_0\in\mathbb{R}^d$,
the map $x\mapsto \widehat{e}(x,y)$ is continuous at $x_0$ for
$\mu$-almost every $y$.

\medskip
\noindent\textbf{(H2)} For every
\[
(w^1,w^2)\in
\bigcup_{K\in\mathcal{K}} \overline{f(K)} \times \overline{F(K)},
\]
one has
\[
\int_{\mathbb{R}^p}
\mathbf{1}_{D_{f,F}}\bigl(w^1+\gamma \widehat{e}(w^2,y)\bigr)\,\mu(dy)=0,
\]
where $D_{f,F}$ denotes the set of discontinuity points of the pair $(f,F)$.

\medskip
\noindent\textbf{(H3)} There exist $\gamma_0>0$ and a tight family
$(\nu^{\gamma})_{\gamma\in(0,\gamma_0]}$ of $P^{\gamma}$-invariant probability
measures.

\medskip
\noindent\textbf{(H4)} There exists a continuous Lyapunov function
$V:\mathbb{R}^d\to\mathbb{R}_+$ such that $V\circ f \le V$ and
\[
\bigl(x_n\to x \ \text{and}\ V(x_n)-V\circ f(x_n)\to 0\bigr)
\;\Longrightarrow\;
V(x)-V\circ f(x)=0 .
\]

\medskip
Then any weak limit point $\nu^0$ of $(\nu^{\gamma})_{\gamma\in(0,\gamma_0]}$
satisfies
\[
\operatorname{supp}(\nu^0)
\subset
\{ V\circ f = V \} \cap \overline{f(\mathbb{R}^d)} .
\]
\end{proposition}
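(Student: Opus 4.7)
I plan to adapt the argument of Proposition~\ref{ps2} to the discontinuous setting. Theorem~\ref{ts3} no longer applies, so the passage to the limit $\gamma\to 0$ must be carried out through the quasi-Feller representation $P^\gamma g = Q^\gamma g \circ H$ with $H = (f,F)$; condition (H4) will take over the role played by continuity of $V \circ f$ in Proposition~\ref{ps2}.

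First I would fix a strictly increasing continuous $\varphi:\mathbb{R}_+\to[0,1)$ with $\varphi(0)=0$ and $\lim_{t\to\infty}\varphi(t)=1$, and set $h:=\varphi\circ V\in C_b(\mathbb{R}^d)$. Since $V\circ f\le V$, also $h\circ f\le h$, so $\widetilde\phi:=h-h\circ f\ge 0$. Define the continuous function $\overline Q h$ on the auxiliary space by $\overline Q h(w^1,w^2):=h(w^1)$, so that $\overline Q h\circ H=h\circ f$. Invariance then gives
\[
\int \widetilde\phi\,d\nu^\gamma = \int P^\gamma h\,d\nu^\gamma - \int h\circ f\,d\nu^\gamma = \int\bigl(Q^\gamma h-\overline Q h\bigr)\circ H\,d\nu^\gamma.
\]
I would then show that $Q^\gamma h\to\overline Q h$ uniformly on compact subsets of the auxiliary space, using continuity of $\varphi\circ V$, the tightness of $\mu$, and local boundedness of $\widehat e$. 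Combined with the tightness of $(\nu^\gamma)$ and hence of $(H_\ast\nu^\gamma)$, this yields $\int\widetilde\phi\,d\nu^\gamma\to 0$.

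To transfer this convergence to a weak limit $\nu^0$, I would use a lower semicontinuous envelope argument driven by (H4). Set $g(x):=\liminf_{y\to x}\widetilde\phi(y)$; then $g$ is lower semicontinuous, $0\le g\le\widetilde\phi$, and the key claim is $\{g=0\}\subset E:=\{V\circ f=V\}$. Indeed, if $g(x)=0$, there exist $y_n\to x$ with $\widetilde\phi(y_n)\to 0$; since $\varphi$ is a homeomorphism on the bounded interval containing $\{V(y_n),V(f(y_n))\}$, this forces $V(y_n)-V(f(y_n))\to 0$, whence (H4) gives $x\in E$. Applying Portmanteau to the bounded lower semicontinuous function $g\wedge 1$,
\[
\int (g\wedge 1)\,d\nu^0 \le \liminf_{\gamma\to 0}\int (g\wedge 1)\,d\nu^\gamma \le \liminf_{\gamma\to 0}\int\widetilde\phi\,d\nu^\gamma = 0,
\]
so $\nu^0(\{g>0\})=0$ and hence $\nu^0(E)=1$. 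The inclusion $\mathrm{supp}(\nu^0)\subset\overline{f(\mathbb{R}^d)}$ is handled separately: for any open $O$ at positive distance from $\overline{f(\mathbb{R}^d)}$, the invariance identity $\nu^\gamma(O)=\int P^\gamma(x,O)\,\nu^\gamma(dx)$ combined with tightness and the vanishing of the displacement $\gamma\widehat e$ forces $\nu^\gamma(O)\to 0$, and Portmanteau for open sets concludes.

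The main technical obstacle is the uniform convergence $Q^\gamma h\to\overline Q h$ on compacts: because $\widehat e$ need not be bounded, the standard uniform-continuity argument must be combined with a tightness splitting of $\mu$ that is uniform as $(w^1,w^2)$ ranges over a given compact. This is where the quasi-Feller structure together with hypotheses (H1) and (H2) enter crucially. Once this uniform convergence is secured, the remainder of the argument is a careful transposition of the continuous case, with the closure assumption (H4) replacing direct continuity of $V\circ f$.
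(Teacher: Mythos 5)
Your argument has the same skeleton as the paper's proof: pick an increasing $\varphi$, set $\widetilde\phi:=\varphi(V)-\varphi(V\circ f)\ge0$, show $\int\widetilde\phi\,d\nu^\gamma\to0$, then use the lower-semicontinuous envelope $g(x)=\liminf_{y\to x}\widetilde\phi(y)$ with Portmanteau and hypothesis (H4) to force $\nu^0(\{g>0\})=0$ and hence $\nu^0(\{V\circ f=V\})=1$; that last step is identical in both proofs. Where you diverge is the first step. You assert that Theorem~\ref{ts3} ``no longer applies'' and rebuild everything through the quasi-Feller factorization $P^\gamma h=Q^\gamma h\circ H$, proving uniform-on-compacts convergence $Q^\gamma h\to\overline Q h$ and invoking tightness of $H_\ast\nu^\gamma$. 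The paper does something much lighter: it observes that the pointwise estimate
\[
\bigl|P^\gamma g(x)-g\circ f(x)\bigr|\le \gamma[g]_1\int_{K_\varepsilon}|e(x,y)|\,\mu(dy)+2\varepsilon\|g\|_\infty
\]
from the proof of Theorem~\ref{ts3} is a pure noise estimate, valid for any bounded Lipschitz $g$ with no continuity required of $f$, so $\nu^\gamma(g\circ f)-\nu^\gamma(g)\to0$ follows directly by invariance; only the passage from $\nu^\gamma$ to $\nu^0$ used continuity of $f$ in~\ref{ts3}, and that is exactly what your envelope argument replaces. Your detour through $Q^\gamma$ is correct in principle but imports exactly the technical obstacles you flag (a uniform-in-$w$ tightness splitting of $\mu$, local boundedness of $\widehat e$, relative compactness of $H(K)$ for discontinuous $f,F$) that the paper's direct pointwise bound simply never meets. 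One genuine addition on your side: you explicitly prove $\mathrm{supp}(\nu^0)\subset\overline{f(\mathbb{R}^d)}$ via a positive-distance open set and Portmanteau, whereas the paper's written proof of Proposition~\ref{ps21} does not address this inclusion (the remark after Theorem~\ref{ts3} that yields it via $\nu^0=\nu^0\circ f^{-1}$ requires $f$ continuous and is unavailable here), so filling that gap is worthwhile.
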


\begin{proof} Focusing on the proof of Theorem \ref{ts3}
 we can oberve that for every bounded Lipschitz function $g$, we have:
\begin{equation}\label{tl}\lim_{\gamma\to 0}|\nu^{\gamma}(g\circ f)-\nu^{\gamma}(g)|=0,\end{equation}
however we cannot assert that $g\circ f$ is a $\nu^0$-$a.s.$
continuous function even if  $g$ is a continuous function.

Consider an increasing Lipschitz function $\varphi$
 from  $\mathbb{R}_+$ to $\mathbb{R}$, such that
$\varphi(0)=0$ and $\displaystyle\lim_{x\to +\infty}\varphi(x)=1$.
Define the function:
$$\widehat{\varphi}(x):=\liminf_{y\to x}\left(\varphi\left(V\right)(y)-
\varphi\left(V\circ f\right)(y)\right).$$

It is clear that we have:
$$0\leq\widehat{\varphi}\leq\varphi\left(V\right)-
\varphi\left(V\circ f\right).$$

$\widehat{\varphi}$ is lower semicontinuous. Inequality \eqref{tl}
allows us to write:
\begin{equation}\label{tl1}0=\lim_{\gamma\to 0}\nu^{\gamma}\left(\varphi(V)-
\varphi(V\circ f) \right)\geq\liminf_{\gamma\to
0}\nu^{\gamma}\left(\widehat{\varphi} \right) \geq
\nu^{0}\left(\widehat{\varphi} \right)\geq 0,\end{equation} which
gives:
\begin{equation}\label{tl2}{\rm supp}\{\nu^0\}\subset \left\{\widehat{\varphi}=0\right\}.\end{equation}

If  $x\in \left\{\widehat{\varphi}=0\right\}$, there exists a
sequence $\left(x_n\right)_{n\in\mathbb{N}}$ that converges to $x$
and verifies:
$$\lim_{n\to +\infty}\left(\varphi\left(V\right)(x_n)-
\varphi\left(V\circ f\right)(x_n)\right)=0.$$

But $\varphi$ is a injective continuous function, so we have:
$$\lim_{n\to +\infty}\left(V(x_n)-V\circ f(x_n)\right)=0,$$
which implies that  $V(x)=V\circ f(x)$ which exactly means that $x\in
\left\{V=V\circ f\right\}$. \end{proof}

\begin{remark} As a concluding note for this section, it is important to emphasize that unlike standard Feller-based approaches, our results remain valid even when the drift $f$ is discontinuous. The Quasi-Feller framework ensures that the repulsive nature of unstable points (further analyzed in Section \ref{sec4}) is not countered by the ``shocks'' of the discontinuity set $D_{f,F}$. This is a crucial feature for applying our theory to systems like the Lloyd algorithm.
\end{remark}

\subsection{Examples and applications}\label{lemn}

Assuming the existence of a global Lyapunov function $V$, any limit distribution $\nu^0$ satisfies
\[
{\rm supp}\{\nu^0\}\subset\{V\circ f=V\}\cap \overline{f(\mathbb{R}^d)}.
\]
We illustrate below how this localization principle applies to (i) genuinely discontinuous dynamics
(Lloyd's algorithm) and (ii) a smooth example exhibiting saddle-type equilibria (the Lemniscate system),
which will play a central role in the saddle-point exclusion results of Section~5.
Additional examples and technical derivations are deferred to Appendix~\ref{app:sec33}.

\subsubsection{Multi-dimensional Lloyd's algorithm (discontinuous deterministic)}

Let $C\subset \mathbb{R}^d$ be a nonempty compact convex set, and let $\mu$ be a diffuse probability
measure on $C$ with finite second moment. For $x=(x^1,\dots,x^n)\in(\mathbb{R}^d)^n$, denote by
$\bigl(C_i(x)\bigr)_{1\le i\le n}$ the associated Vorono\"{\i} cells (see Appendix~\ref{app:lloyd-md}
for the precise definition and conventions in the presence of ties).
Define the quantizer
\[
Q_x(u):=\frac{\int_{C_i(x)} v\,\mu(dv)}{\mu(C_i(x))}\quad \text{if }u\in C_i(x),
\]
and the distortion (Lyapunov) function
\[
V(x):=\mathbb{E}\bigl|Q_x(X)-X\bigr|^2.
\]
The Lloyd map is given by $f(x)=(v^1,\dots,v^n)$ where
\[
v^i:=\frac{\int_{C_i(x)} u\,\mu(du)}{\mu(C_i(x))},\qquad 1\le i\le n,
\]
whenever $\mu(C_i(x))>0$ for all $i$ (a standard assumption ensured in practice by diffuseness and
appropriate restrictions on $x$).

A classical computation shows that $V\circ f\le V$ and that
\[
V\circ f(x)=V(x)\ \Longrightarrow\ f(x)=x,
\]
so that $\{V\circ f=V\}=\Fix(f)$ coincides with the set of centroidal Vorono\"{\i}
tessellations (CVTs). Since $f$ is discontinuous across Vorono\"{\i} boundaries, the Feller property
fails in general; however, under the quasi-Feller/diffusivity assumptions of Section~\ref{sec2},
the invariant measures $(\nu^\gamma)$ exist and are tight, and any weak limit $\nu^0$ satisfies
\[
{\rm supp}\{\nu^0\}\subset \Fix(f).
\]
In dimensions $d\ge 2$, the landscape of $V$ typically features many unstable fixed points (saddles),
and the exclusion of such configurations is addressed by the results of Sections~4--5.
For the one-dimensional Lloyd algorithm and detailed derivations, see Appendix~\ref{app:lloyd-1d}.

\subsubsection{The Lemniscate syste: a smooth deterministic example }

We consider a smooth dynamics constructed from a Lyapunov function exhibiting saddle-type critical
points. Let $L:\mathbb{R}^2\to\mathbb{R}$ be defined by
\[
L(x_1,x_2):=\frac{(x_1^2+x_2^2)^2}{16}-x_1x_2,
\]
and set
\[
V(x):=\frac{L(x)^2}{2(1+L(x)^2)^{3/4}}.
\]
Following \cite{pages}, one can construct a continuous vector field $h$ of the form
$h=\nabla V+\theta$ (with an explicit smooth correction $\theta$) such that
\[
\nabla V(x)\cdot h(x)\ge 0\qquad\text{and}\qquad \{\nabla V\cdot h=0\}
=\{(0,0),(\sqrt2,\sqrt2),(-\sqrt2,-\sqrt2)\}.
\]
Consider the ODE $\dot x=-h(x)$ and fix $u_0>0$. Let $\Phi(\cdot,u_0)$ be the time-$u_0$ map of its
flow, and define the perturbed dynamical system
\[
X_{t+1}^\gamma=\Phi(X_t^\gamma,u_0)+\gamma e(X_t^\gamma,\varepsilon_{t+1}).
\]
A standard bounded-increments argument implies that $V\circ\Phi(\cdot,u_0)\le V$.
Moreover, one obtains
\[
\{V\circ \Phi(\cdot,u_0)=V\}=\{\Phi(\cdot,u_0)=\mathrm{Id}\}
=\{(0,0),(\sqrt2,\sqrt2),(-\sqrt2,-\sqrt2)\}.
\]
Therefore, any weak limit $\nu_{sdp}^{u_0,0}$ of invariant measures for the above PDS satisfies
\[
{\rm supp}\{\nu_{sdp}^{u_0,0}\}\subset \{(0,0),(\sqrt2,\sqrt2),(-\sqrt2,-\sqrt2)\}.
\]
At this stage, the localization result does \emph{not} exclude the saddle-type equilibria
$(\pm\sqrt2,\pm\sqrt2)$.
The mechanism by which persistent noise eliminates such saddles is precisely the content of the
saddle-point exclusion results proved in Section~5, where this example is revisited in detail.

The examples of Section~\ref{lemn} show that support localization alone does not rule out
unstable fixed points such as saddle-type equilibria.
In the next sections, we refine this analysis by exploiting second-order effects induced by
persistent noise, and derive explicit conditions under which invariant measures exclude
local maxima and saddle points.

\section{Exclusion of strict local maxima}
\label{sec5}

Throughout this section, we assume that $V\in C^2(\mathbb{R}^d)$ and that for
every $K>0$,
\[
\sup_{\{V\le K\}} \bigl( |\nabla V| + \|\nabla^2 V\| \bigr) < \infty .
\]

The localization results of Section~\ref{sec:existence-invariance} (and their Lyapunov refinement in
Section~\ref{sec4}) do not, by themselves, exclude unstable fixed points.
This section shows that strict local maxima of the Lyapunov function cannot be charged
by limiting invariant measures.
The mechanism is local and relies on the second-order effects induced by persistent noise.
Even under mere second-moment assumptions, these effects generate a systematic repulsion
near strict local maxima.

We first work in the continuous (Feller) setting, where the second-order expansion
of invariant measures applies.

\subsection{Global exclusion in the Feller setting}

\begin{theorem}[Exclusion of a strict local maximum]\label{ts5}
Let $(\nu_\gamma)_{\gamma>0}$ be a family of $P^\gamma$-invariant probability measures
associated with the perturbed dynamical system $(S_\gamma)$, and assume that
$f:\mathbb{R}^d\to\mathbb{R}^d$ is continuous.
Let $x^*\in\mathbb{R}^d$ satisfy $f(x^*)=x^*$ and assume that $x^*$ is a strict local maximum of the
Lyapunov function $V$, that is,
\[
\nabla V(x^*)=0
\qquad\text{and}\qquad
\nabla^2 V(x^*)\prec 0 .
\]

Assume that the following properties hold.

\medskip
\noindent\textbf{(H-Loc)} (\emph{Localization on fixed points}).  
Along any sequence $\gamma_n\to0$ such that $\nu_{\gamma_n}\Rightarrow \nu_0$, one has
\[
\nu_0\bigl(\Fix(f)\bigr)=1 .
\]

\medskip
\noindent\textbf{(H-Exp)} (\emph{Second-order expansion}).  
For every test function $W\in C_b^2(\mathbb{R}^d)$,
\[
\frac{1}{\gamma_n^2}\,\nu_{\gamma_n}(W-W\circ f)
\;\longrightarrow\;
\frac12\sum_{x\in\Fix(f)}\nu_0(\{x\})
\int e(x,y)^{\top}\nabla^2 W(x)\,e(x,y)\,\mu(dy).
\]

\medskip
\noindent\textbf{(H-Noise)} (\emph{Non-degeneracy at $x^*$}).  
For every symmetric matrix $M\prec 0$,
\[
\int e(x^*,y)^{\top} M\, e(x^*,y)\,\mu(dy) < 0 .
\]

\medskip
Finally, assume that $V$ is regular enough to construct a bounded test function
$W\in C_b^2(\mathbb{R}^d)$ satisfying
\[
W\circ f\le W, \qquad
\nabla^2 W(x^*)\prec 0, \qquad
\nabla^2 W(x)=0 \quad \text{for all } x\in\Fix(f)\setminus\{x^*\},
\]
the construction of such a function being given in the proof.

\medskip
Then any weak accumulation point $\nu_0$ of $(\nu_\gamma)$ satisfies
\[
\nu_0(\{x^*\})=0 .
\]
\end{theorem}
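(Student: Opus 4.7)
The plan is to extract a sign contradiction from the second-order expansion (H-Exp), applied to a carefully chosen test function $W$. The Lyapunov property $W\circ f\le W$ forces the left-hand side of (H-Exp) to be non-negative, while the combination of (H-Noise), the negative-definiteness of $\nabla^2 W$ at $x^*$, and its vanishing elsewhere on $\Fix(f)$ makes the right-hand side strictly negative as soon as $\nu_0$ charges $x^*$, producing the desired contradiction.

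Concretely, I would proceed as follows. Since $W\in C_b^2$ and $W\circ f\le W$, one has $\nu_{\gamma_n}(W-W\circ f)\ge 0$ for every $n$, and hence $\gamma_n^{-2}\,\nu_{\gamma_n}(W-W\circ f)\ge 0$. Passing to the limit via (H-Exp) and using the structural condition $\nabla^2 W(x)=0$ for $x\in\Fix(f)\setminus\{x^*\}$, only the atom at $x^*$ contributes to the sum on the right-hand side, yielding
\[
0\;\le\; \tfrac12\,\nu_0(\{x^*\})\int e(x^*,y)^{\top}\nabla^2 W(x^*)\,e(x^*,y)\,\mu(dy).
\]
Applying (H-Noise) to $M:=\nabla^2 W(x^*)\prec 0$, the integral is strictly negative, and together with $\nu_0(\{x^*\})\ge 0$ this forces $\nu_0(\{x^*\})=0$.

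The main technical obstacle is the construction of $W$. My starting point is $W=\phi\circ V$ for a bounded, non-decreasing, $C^2$ function $\phi:\mathbb{R}\to\mathbb{R}$ with $\phi'(V(x^*))>0$. Monotonicity of $\phi$ together with $V\circ f\le V$ gives $W\circ f\le W$ for free, while $\nabla V(x^*)=0$ combined with the chain rule yields $\nabla^2 W(x^*)=\phi'(V(x^*))\,\nabla^2 V(x^*)\prec 0$, as required. The delicate point is killing $\nabla^2 W$ at every other fixed point. The cleanest case arises when $V(x^*)$ is isolated in $V(\Fix(f))$: one can then choose $\phi$ with $\phi'\equiv\phi''\equiv 0$ outside a small interval around $V(x^*)$ of radius $\delta>0$ chosen small enough that no other fixed point has $V$-value in this range, and the construction is complete. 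When this isolation fails, one must additionally localise $\phi\circ V$ by a smooth spatial factor compatible with $f$, for instance using that superlevel sets $\{V>V(x^*)-\delta\}$ are backward-invariant under $f$ (since sublevel sets of $V$ are forward-invariant by $V\circ f\le V$); this boundary handling is where most of the effort lies, and where the $C^2$-regularity of $V$ together with the strictness of the local maximum at $x^*$ are genuinely used.
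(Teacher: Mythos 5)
Your proposal is correct and follows essentially the same path as the paper's own proof: the sign argument (left-hand side nonnegative because $W\circ f\le W$, right-hand side strictly negative from (H-Noise) and $\nabla^2 W(x^*)\prec 0$, all other fixed points contributing nothing) is verbatim the paper's argument, and your construction $W=\phi\circ V$ with $\phi$ bounded, $C^2$, nondecreasing, strictly increasing near $V(x^*)$ and constant near the $V$-values of the other fixed points is exactly the mechanism of the paper's Lemma~\ref{lem:W-construction}. The only difference is cosmetic: the paper's lemma assumes $V(x^*)>V(x)$ for every other fixed point (so $V(x^*)$ is an endpoint of the value set), while you correctly observe that the weaker condition that $V(x^*)$ be isolated in $V(\Fix(f))$ already suffices; both formulations implicitly require this isolation to build $\varphi_2$, and the non-isolated case you gesture at is not treated by the paper either (it is swept into the hypothesis ``assume $V$ is regular enough to construct such a $W$'').
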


\begin{proof}
Let $\gamma_n\to0$ and assume that $\nu_{\gamma_n}\Rightarrow \nu_0$.
By \textbf{(H-Loc)}, the measure $\nu_0$ is supported on $\Fix(f)$.

Let $W\in C_b^2(\mathbb{R}^d)$ be a bounded test function such that
\begin{itemize}
\item $W\circ f\le W$ on $\mathbb{R}^d$,
\item $\nabla^2W(x^*)\prec 0$,
\item $\nabla^2W(x)=0$ for all $x\in\Fix(f)\setminus\{x^*\}$,
\end{itemize}
(the construction of $W$ is given below).
Applying \textbf{(H-Exp)} to $W$ yields
\[
\frac{1}{\gamma_n^2}\,\nu_{\gamma_n}(W-W\circ f)
\longrightarrow
\frac12\sum_{x\in\Fix(f)}\nu_0(\{x\})
\int e(x,y)^{\top}\nabla^2W(x)\,e(x,y)\,\mu(dy).
\]
By the choice of $W$, all terms in the sum vanish except the one corresponding to $x^*$, hence
\[
\frac{1}{\gamma_n^2}\,\nu_{\gamma_n}(W-W\circ f)
\longrightarrow
\frac12\,\nu_0(\{x^*\})
\int e(x^*,y)^{\top}\nabla^2W(x^*)\,e(x^*,y)\,\mu(dy).
\]

Since $W\circ f\le W$, we have $W-W\circ f\ge 0$ and therefore
$\nu_{\gamma_n}(W-W\circ f)\ge 0$ for every $n$, so the left-hand side above is nonnegative.
On the other hand, $\nabla^2W(x^*)\prec 0$ and \textbf{(H-Noise)} imply
\[
\int e(x^*,y)^{\top}\nabla^2W(x^*)\,e(x^*,y)\,\mu(dy)<0.
\]
Consequently, the only possibility is $\nu_0(\{x^*\})=0$.
\end{proof}

\begin{lemma}[Construction of a test function]\label{lem:W-construction}
Assume that $f:\mathbb{R}^d\to\mathbb{R}^d$ is continuous and that there exists a function
$V:\mathbb{R}^d\to\mathbb{R}$ satisfying the following properties:

\begin{itemize}
\item $V\in C^2$ on a forward invariant sublevel set $\{V\le K\}$;
\item $V\circ f\le V$ on $\mathbb{R}^d$;
\item $x^*$ is a strict local maximum of $V$, i.e. $\nabla V(x^*)=0$ and $\nabla^2V(x^*)\prec0$;
\item $V(x^*)>V(x)$ for all $x\in\mathrm{Fix}(f)\setminus\{x^*\}$.
\end{itemize}

Then there exists a bounded function $W\in C_b^2(\mathbb{R}^d)$ such that:
\begin{enumerate}
\item $W\circ f\le W$ on $\mathbb{R}^d$;
\item $\nabla W(x^*)=0$ and $\nabla^2W(x^*)\prec0$;
\item $\nabla^2W(x)=0$ for all $x\in\mathrm{Fix}(f)\setminus\{x^*\}$.
\end{enumerate}
\end{lemma}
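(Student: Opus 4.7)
The plan is to take $W := \varphi\circ V$ for a well-chosen nondecreasing bounded smooth function $\varphi:\mathbb{R}\to[0,1]$. This composition automatically gives property (1), since $V\circ f\le V$ and $\varphi$ nondecreasing imply $\varphi\circ V\circ f\le \varphi\circ V$. Writing $v^*:=V(x^*)$, the chain rule yields
\[
\nabla W(x)=\varphi'(V(x))\,\nabla V(x),
\qquad
\nabla^2 W(x)=\varphi''(V(x))\,\nabla V(x)\nabla V(x)^{\top}+\varphi'(V(x))\,\nabla^2 V(x).
\]
At $x^*$ the first-order factor $\nabla V(x^*)$ vanishes, leaving $\nabla^2 W(x^*)=\varphi'(v^*)\,\nabla^2V(x^*)$, which is strictly negative definite as soon as $\varphi'(v^*)>0$. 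Property (2) follows.

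The subtle requirement is (3): at any other fixed point $x$, I need $\nabla^2 W(x)=0$. A sufficient condition that makes both terms vanish is that $\varphi$ be flat to second order at $V(x)$, i.e.\ $\varphi'(V(x))=\varphi''(V(x))=0$. I therefore want to design $\varphi$ so that it is locally constant (in fact identically zero) on a neighborhood of every value $V(x)$ attained by a fixed point $x\neq x^*$, while simultaneously satisfying $\varphi'(v^*)>0$. This can be done precisely when
\[
v^* \;\notin\; \overline{\{V(x):x\in\Fix(f)\setminus\{x^*\}\}}.
\]

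To establish this gap I combine two inputs. First, the standing convention of the paper (Remark~\ref{rem:f_id} onward) restricts to dynamics with isolated fixed points, so $x^*$ is isolated in $\Fix(f)$. Second, along any sequence $x_n\in\Fix(f)\setminus\{x^*\}$ with $V(x_n)\to v^*$, coercivity of $V$ and compactness of closed sublevel sets give a limit $x_\infty$ with $f(x_\infty)=x_\infty$ and $V(x_\infty)=v^*$; the strict maximality of $x^*$ forces $x_\infty=x^*$, contradicting isolation. Hence there exists $\eta>0$ with $V(x)\le v^*-\eta$ for all $x\in\Fix(f)\setminus\{x^*\}$.

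With the gap in hand, I choose $\varphi\in C^\infty(\mathbb{R};[0,1])$ nondecreasing, identically zero on $(-\infty,v^*-\eta/2]$, strictly increasing across $v^*$ with $\varphi'(v^*)>0$, and constant on $[K,\infty)$ (possible because $K$ can be taken strictly above $v^*$ within the regular regime), via a standard mollification of a step function. Then $W=\varphi\circ V$ is $C^2$ on $\{V\le K\}$ (where $V$ itself is $C^2$) and constant, hence trivially $C^2$, on its complement; boundedness is inherited from $\varphi\in[0,1]$. Properties (1)--(3) follow by direct inspection. The main obstacle is the separation argument, which couples isolation of fixed points, strict local maximality, and sublevel-set compactness; the remaining construction is a routine cut-off plus mollification.
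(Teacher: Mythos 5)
Your proposal is correct, and it takes the same underlying route as the paper: build $W=\varphi\circ V$ with $\varphi$ a smooth nondecreasing warping of $\mathbb{R}$ that is strictly increasing across $v^*=V(x^*)$, flat (to second order) on the Lyapunov values attained at the other fixed points, and eventually constant. The monotonicity of $\varphi$ gives $W\circ f\le W$; the chain rule gives $\nabla^2W(x^*)=\varphi'(v^*)\nabla^2V(x^*)\prec 0$ once $\nabla V(x^*)=0$ kills the $\varphi''$ term; and flatness of $\varphi$ below $v^*-\eta/2$ kills both terms at every other fixed point. The paper factors this into three separate reparametrizations $\varphi_1$ (regularization so that $\nabla^2(\varphi_1\circ V)$ is bounded), $\varphi_2$ (separation of $v^*$ from the other critical values), and $\varphi_3$ (boundedness), whereas you fold all three into a single $\varphi$ — a genuine but harmless simplification, since the three roles are compatible and can be realized simultaneously.

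Where you go \emph{beyond} the paper is the separation step. The paper's Step~2 merely asserts the existence of a $C^2$ nondecreasing $\varphi_2$ that is strictly increasing near $V(x^*)$ and constant near $\{V(x):x\in\Fix(f)\setminus\{x^*\}\}$; this assertion is only valid if $v^*$ is at a positive distance from that set of critical values. You prove this $\eta>0$ gap explicitly, via compactness of sublevel sets plus continuity of $f$ plus the isolatedness of fixed points (Remark~\ref{rem:f_id}), and this is a real improvement in rigor. One caveat worth flagging: neither compactness of sublevel sets (coercivity) nor isolatedness of fixed points is listed among the hypotheses of the lemma as stated. Both you and the paper therefore rely on standing assumptions drawn from the surrounding sections; you are explicit about this dependence, the paper is silent. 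In Theorem~\ref{ts6} the paper formalizes exactly what is needed via assumption \textbf{(fin)} (finitely many fixed points on each sublevel set), which yields the gap immediately without invoking coercivity; citing that condition would make your argument self-contained and slightly cleaner than the Bolzano--Weierstrass route you take.
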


\begin{proof}
\emph{Step 1: regularization of the Lyapunov function.}
Choose a nondecreasing function $\varphi_1\in C^2(\mathbb{R})$ such that
$\varphi_1(t)=t$ for $t\le K$ and such that $\varphi_1'$ and $\varphi_1''$ have compact support.
Define $V_1:=\varphi_1\circ V$.
Then $V_1\circ f\le V_1$ on $\mathbb{R}^d$, $\nabla^2V_1$ is bounded, and
$\nabla^2V_1(x^*)=\nabla^2V(x^*)\prec0$.

\medskip
\emph{Step 2: separation of fixed points.}
Since $x^*$ is a strict local maximum of $V$ and $V(x^*)>V(x)$ for all
$x\in\mathrm{Fix}(f)\setminus\{x^*\}$, there exists a $C^2$ nondecreasing function
$\varphi_2:\mathbb{R}\to\mathbb{R}$ such that $\varphi_2$ is strictly increasing in a neighborhood
of $V_1(x^*)$ and constant on a neighborhood of
$\{V_1(x):x\in\mathrm{Fix}(f)\setminus\{x^*\}\}$.
Set $V_2:=\varphi_2\circ V_1$.
Then $\nabla^2V_2(x^*)\prec0$ and $\nabla^2V_2(x)=0$ for all
$x\in\mathrm{Fix}(f)\setminus\{x^*\}$.

\medskip
\emph{Step 3: boundedness.}
Finally, choose a nondecreasing function $\varphi_3\in C^2(\mathbb{R})$ which is constant outside
a compact interval and define $W:=\varphi_3\circ V_2$.
Then $W\in C_b^2(\mathbb{R}^d)$, $W\circ f\le W$ on $\mathbb{R}^d$, and
\[
\nabla^2W(x^*)\prec0,
\qquad
\nabla^2W(x)=0 \quad \text{for all } x\in\mathrm{Fix}(f)\setminus\{x^*\}.
\]
This concludes the proof.
\end{proof}

\begin{remark}
If a $C^2$ Lyapunov function $V$ satisfies $V\circ f \le V$ and
$\lim_{|x|\to\infty} V(x)=+\infty$, then all its sublevel sets $\{V\le K\}$ are compact.
In particular, any assumption requiring compactness or forward invariance of
the sublevel sets of $V$ is automatically satisfied under these conditions.
\end{remark}

\begin{remark}
As in \cite{pages}, one may replace the negative definiteness assumption on $\nabla^2 V(x^*)$
by a weaker condition.
It is sufficient that $x^*$ be a strict local maximum and that
\[
\int_{\mathbb{R}^p}
e(x^*,y)^{\top}\,\nabla^2 \widehat W(x^*)\,e(x^*,y)\,\mu(dy) < 0,
\]
which may occur even if $\nabla^2 \widehat W(x^*)$ is not negative definite (one only needs
$\nabla^2 \widehat W(x^*)\le 0$).
For instance, it suffices to assume that there exists an eigenvector $u_1^*$ of
$\nabla^2 V(x^*)$ (and hence of $\nabla^2 \widehat W(x^*)$) such that
\[
\mathbb{P}\bigl(e(x^*,\varepsilon)\cdot u_1^*>0\bigr)>0.
\]
\end{remark}

\subsection{Application to the Lemniscate example}

Recall that for $u>0$, any weak limit $\nu_{sdp}^{u,0}$ associated with the time-$u$ map
$\Phi(\cdot,u)$ satisfies
\[
\mathrm{supp}(\nu_{sdp}^{u,0})\subset\bigl\{(0,0),(\sqrt2,\sqrt2),(-\sqrt2,-\sqrt2)\bigr\}.
\]
Moreover,
\[
\nabla^2 V(\sqrt2,\sqrt2)=\nabla^2 V(-\sqrt2,-\sqrt2)
=-\frac{5}{2^{11/4}}
\begin{pmatrix}
1 & 0\\
0 & 1
\end{pmatrix}.
\]
Therefore, Theorem~\ref{ts5} implies
\[
\nu_{sdp}^{u,0}\bigl(\{(\sqrt2,\sqrt2),(-\sqrt2,-\sqrt2)\}\bigr)=0,
\]
so that the only possible weak limit is $\delta_{(0,0)}$, and hence
\[
\nu_{sdp}^{u,\gamma}\Rightarrow \delta_{(0,0)}\qquad\text{as }\gamma\to0.
\]

\begin{figure}[ht]
    \centering
    \includegraphics[width=0.7\textwidth]{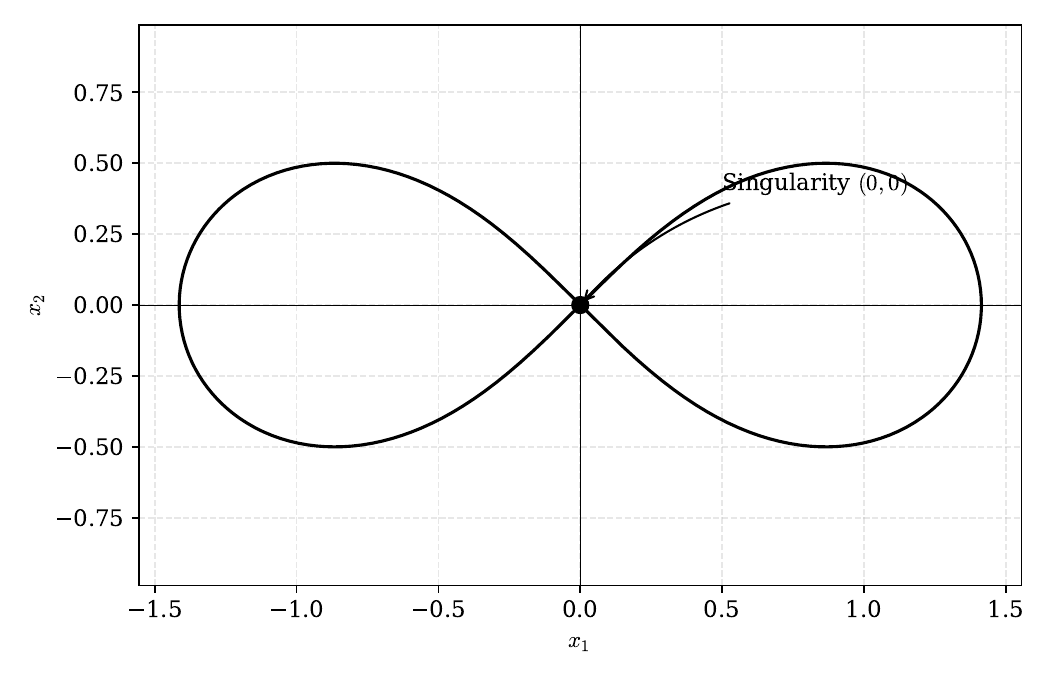}
    \caption{Lemniscate of Bernoulli dynamics. The plot illustrates the trajectory converging toward the central singularity at $(0,0)$, characterizing the asymptotic behavior of the system.}
    \label{fig:lemniscate}
\end{figure}

\subsection{Local exclusion in the Feller setting}

We next state a localized version of the exclusion result, obtained by replacing the
global Lyapunov structure by a local one.

\begin{proposition}[Local exclusion of a strict local maximum]\label{ps6}
Let $(\nu_\gamma)_{\gamma>0}$ be a family of $P^\gamma$-invariant probability
measures associated with the perturbed dynamical system $(S_\gamma)$, and assume
that $f:\mathbb{R}^d\to\mathbb{R}^d$ is continuous.
Let $x^*\in\Fix(f)$.

Assume that there exists $\varepsilon_0>0$ such that the following local framework
holds in $B(x^*,\varepsilon_0)$.

\medskip
\noindent\textbf{(L1) Local Lyapunov structure.}
There exists a function $V\in C^2\bigl(B(x^*,\varepsilon_0)\bigr)$ such that
\begin{itemize}
\item the ball $B(x^*,\varepsilon_0)$ is locally forward invariant:
\[
f^{-1}\!\bigl(B(x^*,\varepsilon_0)\bigr)\subset B(x^*,\varepsilon_0);
\]
\item $x^*$ is a strict local maximum of $V$:
\[
\nabla V(x^*)=0, \qquad \nabla^2 V(x^*)\prec 0,
\]
and
\[
V\circ f(x)<V(x)<V(x^*),
\qquad
\forall x\in B(x^*,\varepsilon_0)\setminus\{x^*\}.
\]
\end{itemize}

\medskip
\noindent\textbf{(L2) Local localization on fixed points.}
Along any sequence $\gamma_n\to0$ such that $\nu_{\gamma_n}\Rightarrow\nu_0$, one has
\[
\nu_0\bigl(B(x^*,\varepsilon_0)\cap\Fix(f)\bigr)=1 .
\]

\medskip
\noindent\textbf{(L3) Local second-order expansion.}
For every test function $W\in C_b^2(\mathbb{R}^d)$ with support contained in
$B(x^*,\varepsilon_0)$,
\[
\frac{1}{\gamma_n^2}\,\nu_{\gamma_n}(W-W\circ f)
\;\longrightarrow\;
\frac12\sum_{x\in B(x^*,\varepsilon_0)\cap\Fix(f)}\nu_0(\{x\})
\int e(x,y)^{\top}\nabla^2 W(x)\,e(x,y)\,\mu(dy).
\]

\medskip
\noindent\textbf{(L4) Local non-degeneracy of the noise.}
For every symmetric matrix $M\prec 0$,
\[
\int e(x^*,y)^{\top} M\, e(x^*,y)\,\mu(dy) < 0 .
\]

\medskip
Assume moreover that $V$ is regular enough to construct a function
$W\in C_b^2(\mathbb{R}^d)$, supported in $B(x^*,\varepsilon_0)$, such that
\[
W\circ f\le W, \qquad
\nabla^2 W(x^*)\prec 0, \qquad
\nabla^2 W(x)=0 \;\;\text{for all }
x\in B(x^*,\varepsilon_0)\cap\Fix(f)\setminus\{x^*\}.
\]

\medskip
Then any weak accumulation point $\nu_0$ of $(\nu_\gamma)$ satisfies
\[
\nu_0(\{x^*\})=0 .
\]
\end{proposition}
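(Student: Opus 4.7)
The plan is to adapt the proof of Theorem~\ref{ts5} to the local setting, using the fact that the second-order expansion (L3) is already stated locally (it is only required for test functions supported in $B(x^*,\varepsilon_0)$), and that the local invariance condition in (L1) is exactly what is needed to turn a locally supported test function into one that satisfies $W\circ f\le W$ globally on $\mathbb{R}^d$.

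First, I would fix a subsequence $\gamma_n\downarrow 0$ along which $\nu_{\gamma_n}\Rightarrow\nu_0$. By (L2), the limiting measure $\nu_0$ is concentrated on the at most countable set $B(x^*,\varepsilon_0)\cap\Fix(f)$, and it decomposes as a (sub-)probability combination of point masses at fixed points lying in the ball. Next, I would invoke the test function $W\in C_b^2(\mathbb{R}^d)$ whose existence is part of the hypotheses: it is supported in $B(x^*,\varepsilon_0)$, satisfies $W\circ f\le W$ on all of $\mathbb{R}^d$, has $\nabla^2 W(x^*)\prec 0$, and has vanishing Hessian at every other fixed point of $f$ lying in the ball.

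Applying the local expansion (L3) to this $W$ then yields, as $n\to\infty$,
\[
\frac{1}{\gamma_n^2}\,\nu_{\gamma_n}(W-W\circ f)
\;\longrightarrow\;
\frac12\,\nu_0(\{x^*\})\int e(x^*,y)^{\top}\nabla^2 W(x^*)\,e(x^*,y)\,\mu(dy),
\]
since by the Hessian-vanishing property of $W$ only the contribution at $x^*$ survives in the finite/countable sum. The left-hand side is nonnegative for every $n$ because $W\circ f\le W$ holds globally, while (L4) applied to the symmetric matrix $M=\nabla^2 W(x^*)\prec 0$ shows that the integral on the right is strictly negative. The only way to reconcile these two facts is $\nu_0(\{x^*\})=0$, which is the desired conclusion.

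The main subtlety, and essentially the only substantive point where the local setting differs from the global one, is the fact that $W\circ f\le W$ must hold on all of $\mathbb{R}^d$ and not merely inside the ball. This is where the local forward invariance condition $f^{-1}(B(x^*,\varepsilon_0))\subset B(x^*,\varepsilon_0)$ in (L1) is essential: it guarantees that if $x\notin B(x^*,\varepsilon_0)$ then $f(x)\notin B(x^*,\varepsilon_0)$ either, so both $W(x)$ and $W(f(x))$ vanish and the inequality is trivially satisfied outside the ball. Inside the ball, the local Lyapunov structure provides the required decrease, which is precisely what is encoded in the assumed construction of $W$; verifying that construction (rather than merely using it) would be the hard part of a fully self-contained proof and would adapt the three steps of Lemma~\ref{lem:W-construction} while composing with a smooth cutoff supported in $B(x^*,\varepsilon_0)$ to preserve the support condition. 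Everything else is a direct transcription of the argument of Theorem~\ref{ts5}.
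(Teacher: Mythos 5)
Your argument is correct and follows essentially the same route as the paper. Where you differ is only in emphasis: the paper's proof spends nearly all of its effort on the explicit construction of the test function (defining the sublevel sets $\Omega_\alpha=\{x\in B(x^*,\varepsilon_0):V(x^*)-\alpha\le V(x)\}$, choosing $\alpha_1$ so that $f(\Omega_{\alpha_1})\subset\Omega_{\alpha_0}$, and composing $V$ with a cutoff $\varphi$ supported near $V(x^*)$ to get a $V^*\in C_b^2$ with the right support and Hessian properties), then closes with a one-line reference to the second-order argument of Theorem~\ref{ts5}. You take the test function as a standing hypothesis — which the proposition's statement in fact licenses — and instead spell out the contradiction step: nonnegativity of $\nu_{\gamma_n}(W-W\circ f)$ from the global inequality $W\circ f\le W$, strict negativity of the limiting integral from (L4), and hence $\nu_0(\{x^*\})=0$. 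You also correctly isolate the role of the local forward invariance $f^{-1}(B(x^*,\varepsilon_0))\subset B(x^*,\varepsilon_0)$: it is what lets a compactly supported $W$ satisfy $W\circ f\le W$ on all of $\mathbb{R}^d$, since $x\notin B$ forces $f(x)\notin B$ and both sides vanish. Your closing remark that a fully self-contained proof would adapt Lemma~\ref{lem:W-construction} with a cutoff is exactly what the paper's construction does, so the two treatments are complementary rather than genuinely different.
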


\begin{proof}
Define
\[
\alpha_0:=\inf_{x\in \partial B(x^*,\varepsilon_0)}\bigl(V(x^*)-V(x)\bigr)>0,
\]
and, for $\alpha\in(0,\alpha_0]$, set
\[
\Omega_{\alpha}:=\Bigl\{x\in B(x^*,\varepsilon_0):\ V(x^*)-\alpha\le V(x)\Bigr\}.
\]
By the local Lyapunov property, there exists $\alpha_1\in(0,\alpha_0]$ such that
\[
f(\Omega_{\alpha_1})\subset \Omega_{\alpha_0}.
\]

Choose a nondecreasing function $\varphi\in C^{\infty}(\mathbb{R})$ such that
\[
\varphi\bigl(V(x^*)\bigr)=1,\qquad
\varphi(y)=0 \ \text{for all } y\le V(x^*)-\alpha_1,\qquad
\varphi'\bigl(V(x^*)\bigr)=1.
\]
Define the function $V^*$ by
\[
V^*(x):=
\begin{cases}
\varphi\circ V(x), & x\in \Omega_{\alpha_1},\\[0.2em]
0, & x\notin \Omega_{\alpha_1}.
\end{cases}
\]
Then $V^*\in C_b^2(\mathbb{R}^d)$, its Hessian is bounded, and one readily checks that
$V^*\circ f\le V^*$ on $\mathbb{R}^d$.

Moreover,
\[
\nabla V^*(x^*)=0,
\qquad
\nabla^2 V^*(x^*)=\nabla^2 V(x^*),
\]
and $\nabla^2 V^*(x)=0$ for all $x\in\Omega_{\alpha_1}\setminus\{x^*\}$.
The conclusion follows by applying the same second-order expansion argument as in the proof of
Theorem~\ref{ts5}.
\end{proof}

\begin{remark}
Assumption~(L1) is restrictive, as it prevents the deterministic dynamics from
leaving and subsequently re-entering the neighborhood $B(x^*,\varepsilon_0)$.
This emphasizes an important distinction between the deterministic and the
perturbed systems: when $\gamma$ is small, the stochastic dynamics remains
essentially local near $x^*$, whereas the deterministic dynamics may exhibit
recurrent returns to this neighborhood in the absence of a Lyapunov structure.
\end{remark}

\subsection{Extension to the quasi-Feller framework}

The second-order expansion used in the continuous (Feller) setting relies on continuity properties
of auxiliary functions of the form
\[
G_{\gamma}(x)
:=\frac{1}{2}\int_{\mathbb{R}^p}
|e(x,y)|^2
\left(
\omega\!\left(\nabla^2 g,f(x),\gamma e(x,y)\right)
\wedge
\|\nabla^2 g\|_{\infty}
\right)
\,\mu(dy),
\]
where $\omega(\cdot,\cdot,\cdot)$ denotes the modulus of continuity of $\nabla^2 g$.
In the quasi-Feller setting, these functions need not be continuous, so the direct weak
convergence argument may fail.
However, the argument remains valid if $G_{\gamma}$ is $\nu^0$-almost surely continuous along
a convergent subsequence.

\begin{theorem}[Exclusion under quasi-Feller assumptions]\label{ts52}
Let $(\nu^{\gamma})_{\gamma\in(0,\gamma_0]}$ be a tight family of $P^{\gamma}$-invariant
probability measures associated with the perturbed dynamical system $(S_{\gamma})$,
and let $\nu^0$ be a weak limit point of $(\nu^{\gamma})_{\gamma\in(0,\gamma_0]}$.

Assume that the quasi-Feller framework introduced in this section holds, namely:
\begin{itemize}
\item the noise admits a quasi-Feller factorization and satisfies the
non-concentration condition;
\item the centeredness and integrability assumptions ensuring the validity of
the second-order expansion are satisfied;
\item the Lyapunov structure and finiteness of fixed points on sublevel sets
hold, and fixed points are compatible with the quasi-Feller discontinuities.
\end{itemize}

Let $x^*\in\Fix(f)$ be a strict local maximum of the Lyapunov function $V$, that is,
\[
\nabla V(x^*)=0
\qquad\text{and}\qquad
\nabla^2 V(x^*)\prec 0 .
\]
Then
\[
\nu^0(\{x^*\})=0 .
\]
\end{theorem}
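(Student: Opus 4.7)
The plan is to transport the second-order exclusion mechanism of Theorem~\ref{ts5} to the quasi-Feller setting by factoring the whole argument through the auxiliary space via the identity $P^\gamma W(x)=Q^\gamma W(H(x))$ with $H=(f,F)$. First, I would apply Lemma~\ref{lem:W-construction} (or a straightforward local variant restricted to a sublevel set of $V$ containing $\mathrm{supp}(\nu^0)$) to build a bounded $W\in C_b^2(\mathbb{R}^d)$ such that $W\circ f\le W$, $\nabla^2 W(x^*)\prec 0$, and $\nabla^2 W(x)=0$ for every $x\in \Fix(f)\setminus\{x^*\}$ inside that sublevel set. The construction relies only on the Lyapunov inequality $V\circ f\le V$ and the finiteness of fixed points on sublevel sets, both of which are in the assumptions, so continuity of $f$ is not actually needed at this step.

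Next, I would Taylor-expand $W$ around $f(x)$ inside $P^\gamma W(x)=\int W(f(x)+\gamma\widehat e(F(x),y))\,\mu(dy)$; the centeredness of the noise kills the first-order term and gives
\[
P^\gamma W(x)-W\circ f(x)=\tfrac{\gamma^2}{2}\,\widetilde G_\gamma\bigl(f(x),F(x)\bigr),
\]
where
\[
\widetilde G_\gamma(w^1,w^2)=\int \widehat e(w^2,y)^\top \nabla^2 W\bigl(w^1+\theta_\gamma\gamma\widehat e(w^2,y)\bigr)\widehat e(w^2,y)\,\mu(dy).
\]
Because $\nabla^2 W$ is bounded and continuous, $\widetilde G_\gamma$ is jointly continuous on the auxiliary space (this is the Feller property of $Q^\gamma$ applied to the product $\widehat e\,\nabla^2 W\,\widehat e$), and by dominated convergence $\widetilde G_\gamma\to\widetilde G_0(w^1,w^2):=\int \widehat e(w^2,y)^\top \nabla^2 W(w^1)\widehat e(w^2,y)\,\mu(dy)$ boundedly as $\gamma\downarrow 0$.

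Using invariance $\nu^\gamma P^\gamma=\nu^\gamma$ together with $W\circ f\le W$, for any sequence $\gamma_n\downarrow 0$ with $\nu^{\gamma_n}\Rightarrow\nu^0$ one then has
\[
0\le \frac{1}{\gamma_n^2}\,\nu^{\gamma_n}(W-W\circ f)=\tfrac{1}{2}\int \widetilde G_{\gamma_n}\circ H\,d\nu^{\gamma_n}.
\]
The localization $\nu^0(\Fix(f))=1$ and the compatibility assumption (fixed points lie in the continuity set of $(f,F)$) make $\widetilde G_0\circ H$ continuous at $\nu^0$-almost every point, so a quasi-Feller Portmanteau argument based on \cite{attali} passes to the limit and yields
\[
\int \widetilde G_{\gamma_n}\circ H\,d\nu^{\gamma_n}\longrightarrow \sum_{x\in \Fix(f)}\nu^0(\{x\})\int e(x,y)^\top \nabla^2 W(x)\,e(x,y)\,\mu(dy).
\]
By the choice of $W$, every term except the one at $x^*$ vanishes, and the remaining term is strictly negative by $\nabla^2 W(x^*)\prec 0$ combined with the non-degeneracy of the noise at $x^*$; the sign inequality $0\le (\text{strictly negative})\cdot \nu^0(\{x^*\})$ then forces $\nu^0(\{x^*\})=0$.

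The hard part will be precisely the passage to the limit in $\int \widetilde G_{\gamma_n}\circ H\,d\nu^{\gamma_n}$. In the Feller case this is routine weak convergence, but here $H=(f,F)$ is discontinuous on $D_{f,F}$, and the composed integrand $\widetilde G_0\circ H$ is a priori only Borel on $\mathbb{R}^d$. The resolution is exactly the design principle of the quasi-Feller framework: the non-concentration condition $Q^\gamma(w,D_H)=0$ together with the compatibility of fixed points with the discontinuity set yield $\nu^0(D_{f,F})=0$, which reduces convergence to a continuity statement on the auxiliary space where $Q^\gamma$ is Feller. This is where the proof truly diverges from the Feller argument of Theorem~\ref{ts5} and where all the quasi-Feller machinery is used.
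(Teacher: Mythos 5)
Your proposal follows essentially the same route as the paper's proof: localize $\nu^0$ on $\{V\circ f=V\}=\Fix(f)$ via the discontinuous-drift localization result, use the compatibility of fixed points with $D_{f,F}$ (together with the quasi-Feller non-concentration condition) to make the second-order expansion integrand $\nu^0$-a.s.\ continuous, and then run the test-function sign argument of Theorem~\ref{ts5} with the $W$ from Lemma~\ref{lem:W-construction}. You simply make explicit the auxiliary-space factorization and the Portmanteau passage to the limit that the paper states tersely, so the argument is correct and matches the paper's proof.
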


\begin{proof}
By Proposition~\ref{ps21}, the support of any weak limit $\nu^0$ is contained in
\[
\operatorname{supp}(\nu^0)\subset \{V\circ f = V\} = \Fix(f).
\]
By the quasi-Feller assumptions, in particular the compatibility condition
$\Fix(f)\cap D_{f,F}=\emptyset$, the mappings $G_\gamma$ are $\nu^0$-almost surely
continuous on $\Fix(f)$. As a consequence, the second-order expansion of invariant
measures applies along any sequence $\gamma_n\to 0$ such that
$\nu_{\gamma_n}\Rightarrow \nu^0$.
The conclusion then follows exactly as in the proof of
Theorem~\ref{ts5}.
\end{proof}

\subsection{Application to noisy best-response dynamics in a coordination game}
\label{sec:game_theory}

We briefly illustrate the quasi-Feller setting on a discontinuous decision rule arising in
coordination games.
Let $x_t\in[0,1]$ denote the proportion of agents playing strategy $A$ and consider the pure
best-response map
\[
f(x)=
\begin{cases}
1, & x>1/2,\\
0, & x<1/2,\\
1/2, & x=1/2.
\end{cases}
\]
The discontinuity at $x=1/2$ is intrinsic and is often bypassed in the literature by smoothing
$f$ through a logit response; see, e.g., \cite{blume,benaim3}.
Here we keep the pure best response and consider the constant-step perturbed system
\[
X_{t+1}^{\gamma}=\Pi_{[0,1]}\bigl(f(X_t^{\gamma})+\gamma \varepsilon_{t+1}\bigr),
\]
where $\Pi_{[0,1]}$ denotes projection onto $[0,1]$ and $(\varepsilon_t)$ are i.i.d.\ centered
perturbations with finite second moment (for instance, $\varepsilon_t\sim\mathcal U([-1,1])$).

Consider the $C^2$ Lyapunov function $V(x)=x^2(1-x)^2$.
Then $V\circ f\le V$, with strict inequality on $[0,1]\setminus\{0,1/2,1\}$, and
$\{V\circ f=V\}=\{0,1/2,1\}$.
Under the quasi-Feller non-concentration condition, Proposition~\ref{ps21} localizes any weak
limit $\nu^0$ to $\{0,1/2,1\}$.
Moreover, since $x^*=1/2$ is a strict local maximum of $V$, Theorem~\ref{ts52} yields
\[
\nu^0(\{1/2\})=0.
\]
Consequently, limiting invariant measures charge only the stable conventions $\{0,1\}$.

\medskip

The results of this section show that strict local maxima of the Lyapunov function are systematically
excluded from the support of limiting invariant measures.
The argument is purely local and relies on second-order variance effects rather than on rare-event
asymptotics.
In particular, only second-moment assumptions are used, in contrast with approaches based on
large deviations or exponential integrability assumptions.
The next section addresses saddle-type equilibria, for which the Hessian admits both stable and
unstable directions and the present argument no longer applies directly.

\section{Exclusion of saddle-type fixed points}
\label{sec6}

This section extends the previous analysis to saddle points of the Lyapunov function.
In contrast with strict local maxima, a saddle point possesses both stable and unstable
directions, so that purely local trapping conditions such as
$f^{-1}(B(x^*,\varepsilon_0))\subset B(x^*,\varepsilon_0)$ are typically unreasonable.
We show that, provided the noise excites the unstable directions, such equilibria cannot
be charged by limiting invariant measures.
The proof relies on a projection of the dynamics onto the unstable subspace and combines
a second-order expansion with a divergence argument along iterates of the linearized map.

\noindent\textbf{Remark.}
Importantly, the regularity assumptions required in this section concern only the deterministic drift $f$, which is assumed to be locally $C^2$ in a neighborhood of the saddle point; the noise term $e$ may remain discontinuous, provided it satisfies the quasi-Feller non-concentration condition.

\subsection{A general exlcusion theorem}

Let $x^*$ be a fixed point of $f$ and assume that $f$ is differentiable at $x^*$.
When $x^*$ is a saddle point for $V$, the linearization typically has an unstable component,
and one expects $\rho(Df(x^*))\ge 1$.
Accordingly, the tangent space admits a $Df(x^*)$-invariant decomposition
\[
\mathbb{R}^d = E_{<1}^* \oplus E_{\geq 1}^*,
\]
where $E_{<1}^*$ (resp.\ $E_{\ge 1}^*$) is the generalized invariant subspace associated with
eigenvalues of modulus strictly less than $1$ (resp.\ greater than or equal to $1$).
We denote by $\Pi^*$ the projector onto $E_{\ge 1}^*$ along $E_{<1}^*$.

We first recall a simple linear-algebraic lemma which will be used in the proof of the saddle-point exclusion result.

\begin{lemma}\label{ls1}
Let $F$ be a finite-dimensional Euclidean space and let $A\in\mathcal L(F)$ such that
$\rho(A^{-1})\le 1$.
Then
\[
\Bigl\{v\in F:\ \lim_{p\to\infty}A^p v=0\Bigr\}=\{0\}.
\]
\end{lemma}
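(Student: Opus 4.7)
The plan is to unpack the hypothesis into a spectral statement about $A$ and then reduce to a per-eigenvalue analysis via the Jordan decomposition. Since $A^{-1}$ exists and $\rho(A^{-1})\le 1$, every eigenvalue $\lambda$ of $A$ satisfies $|\lambda|\ge 1$; in particular $A$ has no eigenvalue of modulus strictly less than $1$. The claim is therefore that the \emph{stable subspace} $\{v:A^p v\to 0\}$ is trivial, which is a standard fact once one reads off the spectrum.

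I would decompose $F=\bigoplus_{\lambda} G_\lambda$ into the generalized eigenspaces of $A$ and denote by $\Pi_\lambda$ the associated spectral projectors. These projectors commute with $A$, so for any $v$ with $A^p v\to 0$ we also have $A^p(\Pi_\lambda v)\to 0$ for every $\lambda$. It thus suffices to show that on each $G_\lambda$ (with $|\lambda|\ge 1$) the equation $A^p w\to 0$ forces $w=0$.

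On $G_\lambda$, write $A=\lambda I+N$ with $N$ nilpotent of index $m\le \dim G_\lambda$. The binomial expansion gives
\[
A^p w=\sum_{j=0}^{m-1}\binom{p}{j}\lambda^{p-j}N^j w .
\]
If $w\neq 0$, let $j_0$ be the largest index with $N^{j_0}w\neq 0$; then $N^{j_0}w,N^{j_0-1}w,\dots,w$ are linearly independent, so the coefficient of $N^{j_0}w$ in the above sum controls $\|A^p w\|$ from below up to a constant. That coefficient has modulus $\binom{p}{j_0}|\lambda|^{p-j_0}$, which either diverges ($|\lambda|>1$, or $|\lambda|=1$ and $j_0\ge 1$) or equals $1$ ($|\lambda|=1$ and $j_0=0$). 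In either case $\|A^p w\|$ does not tend to $0$, a contradiction.

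The argument is essentially routine linear algebra; the only point that deserves attention is the case $|\lambda|=1$, since then $A^p w$ need not diverge and one must instead observe that $\|A^p w\|$ stays bounded away from zero, which is why the lower bound via the highest nonvanishing $N^j w$ is the right device. No other step should pose a difficulty.
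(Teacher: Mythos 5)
Your proof is correct but takes a genuinely different route from the paper's. The paper works directly on the stable subspace $G=\{v:A^p v\to 0\}$: after noting that $G$ is $A$-invariant, it uses equivalence of norms on the finite-dimensional space $G$ to deduce $\|A^p|_G\|\to 0$, and then derives a contradiction from the operator-norm bound $\|A^p|_G\|\ge\rho\bigl((A|_G)^p\bigr)=\rho(A|_G)^p\ge 1$, where $\rho(A|_G)\ge 1$ follows because the eigenvalues of $A|_G$ are among those of $A$, all of modulus at least one. You instead pass to the generalized eigenspace decomposition $F=\bigoplus_\lambda G_\lambda$, reduce to a single Jordan block $A=\lambda I+N$, expand $A^p w$ by the binomial theorem, and read off the dominant coordinate $\binom{p}{j_0}\lambda^{p-j_0}$ via linear independence of $w,Nw,\dots,N^{j_0}w$. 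Both arguments are sound; the paper's avoids any explicit Jordan structure and is a touch shorter, while yours is more concrete and additionally yields the growth rate of $\|A^p w\|$ (polynomial times $|\lambda|^p$), which the paper's abstract contradiction argument does not. The case split at $|\lambda|=1$, which you flag as the delicate point, is handled correctly by observing that the top coefficient stays bounded below by $1$ rather than diverging.
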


\begin{proof}
Let $G:=\{v\in F:\ A^p v\to0\}$.
Then $G$ is a nontrivial linear subspace only if $G\neq\{0\}$.
Assume $G\neq\{0\}$ and let $q=\dim(G)$ with a basis $(v_1,\ldots,v_q)$.
Define on $G$ the norm $|\sum_{i=1}^q\lambda_i v_i|_1:=\sum_{i=1}^q|\lambda_i|$.
For $v=\sum\lambda_i v_i\in G$,
\[
|A^p v|\le \sum_{i=1}^q|\lambda_i|\,|A^p v_i|
\le |v|_1 \sup_{1\le i\le q}|A^p v_i|.
\]
Since all norms are equivalent on $G$, there exists $\alpha>0$ such that $|v|_1\le\alpha|v|$,
and hence
\[
\|A^p|_G\|\le \alpha\sup_{1\le i\le q}|A^p v_i|.
\]
By definition of $G$, the right-hand side tends to $0$, so $\|A^p|_G\|\to0$.
This is impossible since $\rho(A^p|_G)=\rho((A|_G)^p)\ge 1$ under $\rho(A^{-1})\le1$.
Thus $G=\{0\}$.
\end{proof}

We are now in a position to state and prove the exclusion result for saddle-type fixed points.

\begin{theorem}[Exclusion of a saddle-type fixed point]\label{ts6}
Let $(\nu^\gamma)_{\gamma\in(0,\gamma_0]}$ be a tight family of $P^\gamma$-invariant probability measures
associated with the perturbed dynamical system $(S_\gamma)$, and let $\nu^0$ be a weak limit of
$(\nu^\gamma)$.
Assume that $f:\mathbb{R}^d\to\mathbb{R}^d$ is $C^2$, and that there exists a function
$V\in C^2(\mathbb{R}^d)$ such that:

\begin{itemize}
\item \textbf{($\alpha$)} $V\circ f\le V$ and $\{V\circ f=V\}=\Fix(f)$;

\item \textbf{($\beta$)} if $(x_n)_{n\ge1}$ is a sequence such that
$x_n\to x$ and $V(x_n)-V\circ f(x_n)\to0$, then $V(x)-V\circ f(x)=0$;

\item \textbf{($\delta$)} for all $K>0$, the sublevel set $\{V\le K\}$ is forward invariant under $f$ and
\[
\sup_{\{V\le K\}}
\Bigl(
|\nabla V|+\|\nabla^2 V\|+\|Df\|+\|D^2 f\|
\Bigr)<\infty;
\]

\item \textbf{(fin)} for all $v>0$, the set $\Fix(f)\cap\{V\le v\}$ is finite.
\end{itemize}

Let $x^*\in\Fix(f)$ be such that $\nabla V(x^*)=0$ and $x^*$ is a critical point of saddle type for the Lypunov function $V$
in the sense that the linearized Lyapunov dissipation at $x^*$ statisfies assumption \textbf{(a)} below.

Assume moreover that the following properties hold.

\medskip
\noindent\textbf{(H-Loc)} (\emph{Localization on fixed points}).
Along any sequence $\gamma_n\to0$ such that $\nu^{\gamma_n}\Rightarrow\nu_0$, one has
$\nu_0(\Fix(f))=1$.

\medskip
\noindent\textbf{(H-Exp)} (\emph{Second-order expansion}).
For every $W\in C_b^2(\mathbb{R}^d)$,
\[
\frac{1}{\gamma_n^2}\,\nu^{\gamma_n}(W-W\circ f)
\longrightarrow
\frac12\sum_{x\in\Fix(f)}\nu_0(\{x\})
\int_{\mathbb{R}^p} e(x,y)^{\top}\nabla^2W(x)\,e(x,y)\,\mu(dy).
\]

\medskip
\noindent\textbf{(a)} (\emph{Saddle-type nondegeneracy of the linearized Lyapunov dissipation at $x^*$}).
There exists $c>0$ such that
\[
\nabla^2V(x^*) - Df(x^*)^{\top}\,\nabla^2V(x^*)\,Df(x^*) \ \ge\ c\,I.
\]

\medskip
\noindent\textbf{(b)} (\emph{Noise excites the noncontracting directions}).
Let $E_{\ge1}^*$ be the $Df(x^*)$-invariant subspace associated with eigenvalues of modulus at least $1$,
and let $\Pi^*$ denote the projection onto $E_{\ge1}^*$ along a complementary invariant subspace.
Assume
\[
\int_{\mathbb{R}^p}\|\Pi^* e(x^*,y)\|^2\,\mu(dy)\ >\ 0 .
\]

Then
\[
\nu^0(\{x^*\})=0.
\]
\end{theorem}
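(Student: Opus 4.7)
My plan is to argue by contradiction: suppose $\nu^0(\{x^*\})>0$ and derive a contradiction by applying the second-order expansion \textbf{(H-Exp)} telescopically along iterates of $f$, with a test function $W$ engineered so that its Hessian at $x^*$ is strictly concave along the non-contracting subspace $E_{\ge 1}^*$. The architecture parallels the strict local maximum case of Theorem~\ref{ts5}, but the required strict concavity can no longer be supplied by the indefinite Hessian $\nabla^2V(x^*)$ alone; instead, the divergence of the noise variance along iterates of $A_u:=Df(x^*)|_{E_{\ge 1}^*}$ (guaranteed by Lemma~\ref{ls1} and the non-degeneracy~\textbf{(b)}) is what converts the Stein-type inequality~\textbf{(a)} into an exclusion statement at $x^*$.

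First I would localize. Combining~\textbf{(fin)} with~\textbf{(H-Loc)}, any weak limit $\nu^0$ is supported on the isolated set $\Fix(f)$ in each sublevel set of $V$; choosing a ball $B=B(x^*,\varepsilon)$ with $\bar B\cap\Fix(f)=\{x^*\}$ ensures that any $C_b^2$ function supported in $B$ isolates $x^*$ on the right-hand side of~\textbf{(H-Exp)}. I then pick a symmetric positive-definite matrix $M$ on $E_{\ge 1}^*$ satisfying the discrete Lyapunov inequality $A_u^\top M A_u\succeq M$ (which exists because all eigenvalues of $A_u$ have modulus at least $1$; explicit $M$ is obtained from the Jordan form via an adapted inner product), and set
\[
W(x):=\varphi(V(x))\;-\;\eta\,\chi(x)\,(x-x^*)^\top\Pi^{*\top}M\Pi^*(x-x^*),
\]
with $\varphi\in C_b^2(\mathbb{R})$ nondecreasing, $\chi$ a smooth cutoff supported in $B$ and equal to $1$ on a smaller ball, and $\eta>0$ chosen small enough (relative to $c$, to $\varphi'(V(x^*))$, and to the regularity bounds in~\textbf{($\delta$)}) that the Stein dissipation $V-V\circ f\ge\tfrac{c}{4}\|x-x^*\|^2$ granted by~\textbf{(a)} absorbs the anti-Lyapunov correction both inside $B$ and across its boundary. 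This guarantees $W\circ f\le W$ globally and yields $\nabla^2W(x^*)=\varphi'(V(x^*))\,\nabla^2V(x^*)-2\eta\,\Pi^{*\top}M\Pi^*$.

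Next I would iterate the expansion. Using the stationarity identity $\nu^\gamma(W-W\circ f^p)=\sum_{k=0}^{p-1}\nu^\gamma(W\circ f^k-W\circ f^{k+1})$ and applying~\textbf{(H-Exp)} to each $g_k:=W\circ f^k\in C_b^2(\mathbb{R}^d)$ (a chain-rule computation gives $\nabla g_k(x^*)=0$ and $\nabla^2 g_k(x^*)=(A^k)^\top\nabla^2W(x^*)A^k$ where $A=Df(x^*)$), I obtain
\[
\tfrac{1}{\gamma_n^2}\,\nu^{\gamma_n}(W-W\circ f^p)\;\longrightarrow\;\tfrac12\,\nu^0(\{x^*\})\int e(x^*,y)^\top\!\Bigl(\sum_{k=0}^{p-1}(A^k)^\top\nabla^2W(x^*)A^k\Bigr)e(x^*,y)\,d\mu(y).
\]
Using $A\Pi^*=\Pi^* A$, the contribution of the correction term is $-2\eta\sum_{k=0}^{p-1}\int (A_u^k\Pi^*e(x^*,y))^\top M\,(A_u^k\Pi^*e(x^*,y))\,d\mu(y)$, which diverges to $+\infty$ as $p\to\infty$ by Lemma~\ref{ls1} applied to $A_u$ (whose $\rho(A_u^{-1})\le 1$) together with~\textbf{(b)}. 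Once $\varphi'(V(x^*))$ is chosen small enough that the $\varphi' H$ contribution is dominated, the total integral tends to $-\infty$, whereas the left-hand side is nonnegative by $W\circ f^p\le W$. Letting $p\to\infty$ forces $\nu^0(\{x^*\})=0$.

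The main obstacle I anticipate is the joint calibration of $W$: it must be a genuine global Lyapunov function for $f$ while carrying strict concavity along $E_{\ge 1}^*$ at $x^*$, despite the mixed-signature Hessian $\nabla^2V(x^*)$ at a saddle. Unlike Theorem~\ref{ts5}, where $V$ itself provides the correct local geometry, here the unstable concavity must be injected through the correction $-\eta\chi\,(\cdot-x^*)^\top\Pi^{*\top}M\Pi^*(\cdot-x^*)$, whose magnitude is tightly constrained by the Stein margin $c$ in~\textbf{(a)}, while the Lyapunov property of $\varphi\circ V$ must compensate for the boundary jump of the correction at $\partial B$. A secondary issue is rigorously justifying the telescoped limit: this requires bounding the remainder terms in each application of~\textbf{(H-Exp)} uniformly in $k$, using the regularity bounds in~\textbf{($\delta$)} and the finite second moment of $e$. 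The argument persists in the quasi-Feller setting because the non-concentration condition ensures that the relevant auxiliary functions are $\nu^0$-a.s. continuous at $x^*$.
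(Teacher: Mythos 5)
Your core mechanism is right — iterate the dissipation inequality \textbf{(a)} along powers of $Df(x^*)$, use $\rho(Df(x^*)|_{E_{\ge 1}^*}^{-1})\le 1$ (Lemma~\ref{ls1}) and \textbf{(b)} to make the second-order functional at $x^*$ diverge to $-\infty$, and contradict the nonnegativity of $\nu^{\gamma_n}(W-W\circ f)$ granted by the Lyapunov property. But your route is substantially more baroque than the paper's, and two of its sub-arguments do not hold as stated.

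The paper does not build a bespoke test function. It simply takes $W_p := V^\star\circ f^p$, where $V^\star=\varphi_1\circ V$ is the same bounded $C^2$ truncation used in Theorem~\ref{ts5}, applies \textbf{(H-Exp)} directly to $W_p$ (so that $W_p-W_p\circ f = W_p-W_{p+1}\ge 0$), and reads off $\nabla^2 W_p(x^*)=\varphi_1'(V(x^*))\,(Df(x^*)^p)^\top\nabla^2 V(x^*)\,Df(x^*)^p$. The iterated form of \textbf{(a)} gives, for every $y$,
\[
\bigl(Df(x^*)^p e(x^*,y)\bigr)^\top\nabla^2 V(x^*)\bigl(Df(x^*)^p e(x^*,y)\bigr)
\le e(x^*,y)^\top\nabla^2 V(x^*) e(x^*,y)-c\sum_{k=0}^{p-1}\|Df(x^*)^k e(x^*,y)\|^2 ,
\]
and the right-hand side diverges to $-\infty$ on the set $\{\Pi^* e(x^*,\cdot)\ne 0\}$, which has positive $\mu$-measure by \textbf{(b)}. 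Monotone convergence finishes the divergence, and the contributions of the other fixed points are controlled by $\nabla^2 V^\star\ge\nabla^2(V^\star\circ f^p)$ uniformly in $p$. No telescoping, no cutoff, no anti-Lyapunov matrix $M$: the indefinite Hessian $\nabla^2 V(x^*)$ does the job on its own, because the iterated Stein inequality \emph{creates} the concavity after finitely many iterations. Your stated motivation for the correction term — that "$\nabla^2 V(x^*)$ alone cannot supply the required strict concavity" — is therefore unfounded: you already invoke the iteration, which supplies it.

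The two genuine weaknesses in your write-up. First, the calibration $W\circ f\le W$ for your corrected test function is not merely an "anticipated obstacle": with a cutoff $\chi$ that drops from $1$ to $0$ across an annulus and a correction that is anti-Lyapunov (it \emph{increases} under the linearized $f$ on $E_{\ge 1}^*$), there is no reason the local quadratic descent $V-V\circ f\ge\frac{c}{4}\|x-x^*\|^2$ can absorb the correction across the boundary of $B(x^*,\varepsilon)$, where the correction's increase under $f$ is not small relative to $\|x-x^*\|^2$; making $\eta$ small controls the interior but not the cutoff jump, and making $\varepsilon$ small shrinks both. This step is not a detail — it is where your construction is most likely to fail, and you offer only a promissory note. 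Second, your remedy for the $\varphi'(V(x^*))\nabla^2 V$ contribution — choose $\varphi'(V(x^*))$ small so that it is "dominated" — does not work, because both the $\varphi'$-part and the $M$-part of the telescoped sum grow with $p$; a fixed scalar prefactor cannot change the asymptotic order. What actually saves you is that the iterated Stein inequality forces the $\varphi'$-part itself to diverge to $-\infty$ on $\{\Pi^* e\ne 0\}$ — but at that point you have reproduced the paper's argument, and the correction term, the cutoff, and the telescoping were all unnecessary.
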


\begin{remark}
The unstable subspace $E_{\ge 1}^*$ is necessarily nontrivial.
If $E_{<1}^*=\{0\}$, then $\Pi^*$ coincides with the identity on $\mathbb{R}^d$ and
assumption~(b) reduces to $\int |e(x^*,y)|\,\mu(dy)>0$.
\end{remark}


\begin{proof}
\textsc{Step 1. Construction of iterated test functions.}
Fix $K>V(x^*)$ and let $V^\star=\varphi_1\circ V$ be the bounded $C^2$ function introduced in the proof
of Theorem~\ref{ts5}, where $\varphi_1\in C^\infty(\mathbb{R}_+,\mathbb{R}_+)$ is nondecreasing,
$\varphi_1(0)=0$, $\varphi_1'(y)>0$ for $y\in[0,K)$, and $\varphi_1$ is constant on $[K,\infty)$.
Then $V^\star\in C_b^2(\mathbb{R}^d)$, $\nabla V^\star$ vanishes on $\{V\le K\}^c$, and
$V^\star\circ f\le V^\star$.

For $p\ge1$, define $W_p:=V^\star\circ f^p$.
Since $V\circ f\le V$, the sublevel set $\{V\le K\}$ is forward invariant under $f$, hence under $f^p$.
Using the boundedness of $Df$ and $D^2f$ on $\{V\le K\}$ together with the boundedness of
$\nabla V^\star$ and $\nabla^2V^\star$ (with $\nabla^2V^\star$ vanishing on $\{V\le K\}^c$),
we obtain that $W_p\in C_b^2(\mathbb{R}^d)$ and that $\nabla^2W_p$ is bounded.

Moreover, since $f(x^*)=x^*$ and $\nabla V^\star(x^*)=0$, we have
$\nabla W_p(x^*)=0$ and
\begin{equation}\label{eq:saddle-hess}
\begin{aligned}
\nabla^2 W_p(x^*)
&= (Df(x^*)^p)^\top \nabla^2V^\star(x^*)\,Df(x^*)^p \\
&= \varphi_1'\!\bigl(V(x^*)\bigr)\,
(Df(x^*)^p)^\top \nabla^2V(x^*)\,Df(x^*)^p .
\end{aligned}
\end{equation}
If $\widehat x\in\mathrm{Fix}(f)\cap\{V\le K\}^c$, then $\nabla^2W_p(\widehat x)=0$.

Let $\gamma_n\searrow0$ be such that $\nu^{\gamma_n}\Rightarrow\nu^0$.
Since $\Omega_v:=\mathrm{Fix}(f)\cap\{V\le K\}$ is finite, the second-order expansion applied to $W_p$
yields, for each fixed $p\ge1$,
\begin{equation}\label{eq:saddle-exp}
\lim_{n\to\infty}\frac{1}{\gamma_n^2}\,
\nu^{\gamma_n}(W_p-W_{p+1})
=
\frac12\sum_{x_0\in A_v}\nu^0(\{x_0\})
\int_{\mathbb{R}^p} e(x_0,y)^\top \nabla^2W_p(x_0)\,e(x_0,y)\,\mu(dy).
\end{equation}
Since $V^\star\ge V^\star\circ f$, we have $W_p\ge W_{p+1}$, hence the left-hand side
of~\eqref{eq:saddle-exp} is nonnegative.

\medskip
\textsc{Step 2. Divergence of the unstable quadratic form.}
By $V\circ f\le V$, the matrix
\[
\nabla^2V(x^*)-Df(x^*)^\top\nabla^2V(x^*)Df(x^*)
\]
is nonnegative. By assumption~\textbf{(a)}, it is nondegenerate, hence there exists $c>0$ such that
\[
\nabla^2V(x^*)-Df(x^*)^\top\nabla^2V(x^*)Df(x^*)\ge c\,I .
\]
Equivalently, for all $y$,
\[
(Df(x^*)e(x^*,y))^\top\nabla^2V(x^*)(Df(x^*)e(x^*,y))
\le
e(x^*,y)^\top\nabla^2V(x^*)e(x^*,y)-c\,\|e(x^*,y)\|^2 .
\]
Iterating this inequality yields, for all $p\ge1$ and all $y$,
\begin{equation}\label{eq:saddle-iter}
\begin{aligned}
&(Df(x^*)^p e(x^*,y))^\top \nabla^2V(x^*)
(Df(x^*)^p e(x^*,y)) \\
&\qquad\le
e(x^*,y)^\top \nabla^2V(x^*) e(x^*,y)
- c\sum_{k=0}^{p-1}\|Df(x^*)^k e(x^*,y)\|^2 .
\end{aligned}
\end{equation}

Let $E_{\ge1}^\star$ denote the $Df(x^*)$-invariant subspace associated with eigenvalues
of modulus at least one, and let $\Pi^\star$ be the corresponding projection.
Since $\Pi^\star$ commutes with $Df(x^*)$, there exists $C>0$ such that
\[
\|Df(x^*)^k e(x^*,y)\|
\ge
C\,\|Df(x^*)^k \Pi^\star e(x^*,y)\|
\quad\text{for all }k\ge0.
\]

By Lemma~\ref{ls1}, $\Pi^\star e(x^*,y)\neq 0$ implies that
$\|Df(x^*)^k \Pi^\star e(x^*,y)\|\not\to 0$ as $k\to\infty$;
since a necessary condition for the convergence of a series is that its
general term tends to zero, it follows that
\[
\sum_{k=0}^{p-1}\|Df(x^*)^k \Pi^\star e(x^*,y)\|^2 \longrightarrow +\infty
\quad\text{as } p\to\infty .
\]

By assumption~\textbf{(b)}, the set of $y$ such that $\Pi^\star e(x^*,y)\neq 0$ has positive
$\mu$-measure. For such $y$, Lemma~\ref{ls1} implies that
\[
S_p(y):=\sum_{k=0}^{p-1}\|Df(x^*)^k \Pi^\star e(x^*,y)\|^2 \longrightarrow +\infty
\quad\text{as } p\to\infty .
\]
Since $(S_p(y))_{p\ge1}$ is nondecreasing in $p$, the monotone convergence theorem yields
$\int S_p(y)\,\mu(dy)\to +\infty$. Using~\eqref{eq:saddle-iter}, we conclude that
\[
\lim_{p\to\infty}
\int_{\mathbb{R}^p}
(Df(x^*)^p e(x^*,y))^\top \nabla^2V(x^*)
(Df(x^*)^p e(x^*,y))\,\mu(dy)
= -\infty .
\]
\medskip
\textsc{Step 3. Conclusion.}
Assume by contradiction that $\nu^0(\{x^*\})>0$.
Then the right-hand side of~\eqref{eq:saddle-exp} tends to $-\infty$ as $p\to\infty$,
while the left-hand side is nonnegative for all $p\ge1$.
This contradiction implies $\nu^0(\{x^*\})=0$.
\end{proof}

\subsection{Illustrative examples}

\subsubsection{A double-well potential}

Consider the one-dimensional potential
\[
V(x)=\frac14 x^4-\frac12 x^2,
\]
whose critical points are the two local minima $\pm 1$ and the unstable equilibrium at $0$.
Let $u_0>0$ and let $f$ be the time-$u_0$ map of the gradient flow $\dot x=-V'(x)$, i.e.
$f(x)=\Phi(x,u_0)$.
Then $f(0)=0$, $f(\pm1)=\pm1$, and $0$ is unstable since $f'(0)=e^{u_0}>1$.
For $u_0$ small enough, $V$ provides a Lyapunov function for the discretized dynamics and
tightness of invariant measures holds under the standing assumptions.
Theorem~\ref{ts6} then implies that any weak limit $\nu^0$ satisfies $\nu^0(\{0\})=0$, and hence
\[
\mathrm{supp}(\nu^0)\subset\{-1,1\}.
\]

\begin{figure}[ht]
    \centering
    \includegraphics[width=0.7\textwidth]{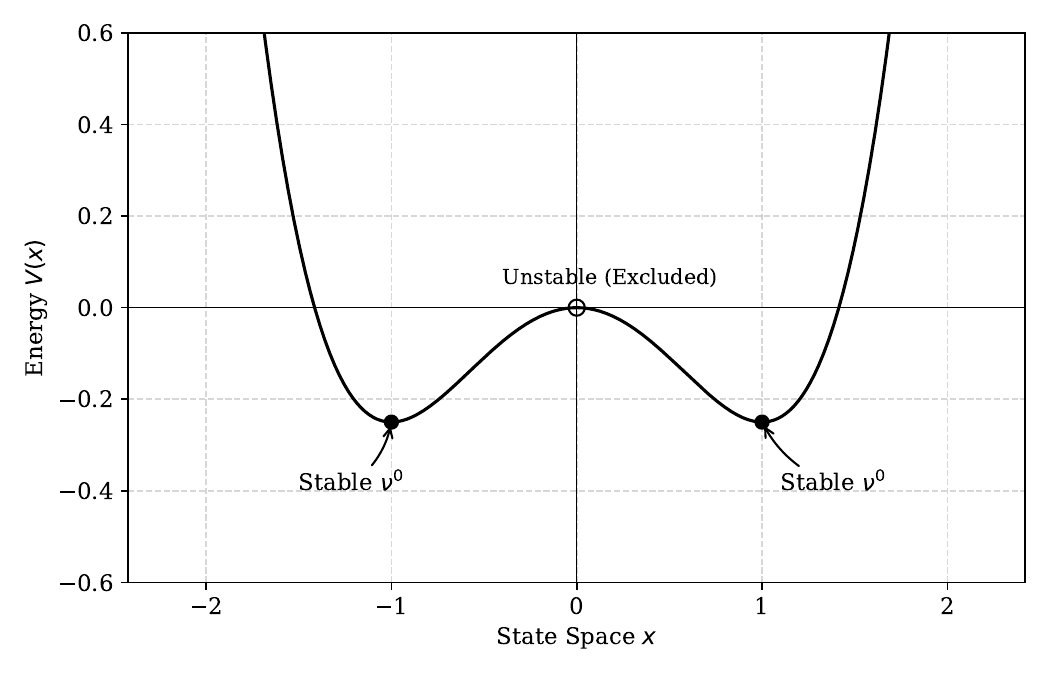}
    \caption{Potential landscape for the double-well system. Limiting invariant measures charge the stable equilibria $\{-1,1\}$ and exclude the unstable equilibrium at $0$.}
    \label{fig:potential}
\end{figure}

\subsubsection{A degenerate saddle: the lemniscate case}

The Lemniscate example provides a counterpoint showing that the non-degeneracy assumptions in
Theorem~\ref{ts6} cannot be dropped blindly.
In that system, the limiting measure concentrates at $(0,0)$, yet a direct computation yields
$\nabla^2V(0,0)=0$, and therefore
\[
\nabla^2 V(0,0) - Df(0,0)^{\top}\,\nabla^2 V(0,0)\,Df(0,0)=0.
\]
In this degenerate situation, the curvature-based mechanism captured by Theorem~\ref{ts6} is absent,
and a saddle point may indeed retain positive mass.
This illustrates that the theorem identifies a genuine threshold: exclusion holds when instability
and curvature interact through a non-degenerate second-order gap, and may fail when this gap
vanishes.

\section{Conclusion}

We have studied equilibrium selection for stochastic algorithms with constant step size in a
quasi-Feller framework, under minimal moment assumptions on the noise.
Our results show that the asymptotic behavior of invariant measures is governed by local Lyapunov
geometry and by the interaction between deterministic drift and persistent variance, rather than
by large-deviation effects or global smoothness assumptions.

At the level of localization, we prove that any weak limit of invariant measures is supported on
the Lyapunov plateau $\{V\circ f=V\}$ (and, under mild conditions, on the set of fixed points).
Going beyond localization, we establish sharp exclusion results for unstable equilibria.
In particular, strict local maxima and saddle points of the Lyapunov function are shown to be
excluded from the support of limiting invariant measures under explicit non-degeneracy conditions
linking curvature and noise in unstable directions.

These conclusions apply to a variety of discontinuous models, including projection-based
algorithms, vector quantization schemes such as Lloyd's algorithm, and noisy best-response
dynamics in coordination games.
They provide a unified probabilistic explanation for equilibrium selection in settings that fall
outside the scope of classical Feller theory or approaches based on small-noise large deviations.
From this perspective, selection in constant-step dynamics appears as a robust local phenomenon
driven by typical fluctuations rather than rare events.

Several directions remain for future work.
It would be natural to extend the analysis to broader classes of perturbations, to relax the
Lyapunov assumptions (for instance by allowing weaker descent or non-isolated plateaus), and to
investigate infinite-dimensional or constrained-state-space settings.
More broadly, we hope that the quasi-Feller viewpoint, together with the invariant-measure
approach to constant-step dynamics, will be useful for analyzing stability and selection in
stochastic systems operating under persistent noise.

\bibliography{PDS}

\appendix
\section{Additional technical material}\label{app:sec33}

This appendix gathers examples and derivations that were part of Section~\ref{lemn} in an extended
version, and are moved here for readability.

\subsection{A discontinuous contracting map}\label{app:discont-contract}

Let $f$ be a Borel function from $\mathbb{R}^d$ to $\mathbb{R}^d$ such that
\[
|f(x)|\leq a|x|,\quad 0<a<1.
\]
This condition is fulfilled by discontinuous Borel functions whose graph is of the form shown in
Figure~\ref{fig1}.

\begin{figure}[!ht]
\centering
\begin{tikzpicture}[scale=0.5, line cap=round, line join=round, >=Latex]

  \def\a{0.5}   
  \def\k{0.92}  
  \def\xmax{5.2}
  \def\ymax{3.7}

  \draw[->, very thin] (-\xmax,0) -- (\xmax,0);
  \draw[->, very thin] (0,-\ymax) -- (0,\ymax);

  \draw[thin] (-\xmax,{-\a*\xmax}) -- (\xmax,{\a*\xmax});
  \draw[thin] (-\xmax,{\a*\xmax}) -- (\xmax,{-\a*\xmax});

  \node[above right, font=\small] at (3.2,{\a*4.8}) {$y=ax$};
  \node[above right, font=\small] at (3.2,{-\a*6.8}) {$y=-ax$};

  \draw[thin] plot[smooth] coordinates {
    (-4.9,{-\k*\a*4.9*0.98}) (-4.6,{-\k*\a*4.6*0.85})
    (-4.3,{-\k*\a*4.3*0.95}) (-4.0,{-\k*\a*4.0*0.80})
  };
  \draw[thin] plot[smooth] coordinates {
    (-3.9,{\k*\a*3.9*0.95}) (-3.6,{\k*\a*3.6*0.70})
    (-3.3,{\k*\a*3.3*0.92}) (-3.1,{\k*\a*3.1*0.78})
  };
  \draw[thin] plot[smooth] coordinates {
    (-3.0,{-\k*\a*3.0*0.90}) (-2.7,{-\k*\a*2.7*0.65})
    (-2.4,{-\k*\a*2.4*0.88}) (-2.2,{-\k*\a*2.2*0.72})
  };
  \draw[thin] plot[smooth] coordinates {
    (-2.1,{\k*\a*2.1*0.92}) (-1.9,{\k*\a*1.9*0.55})
    (-1.6,{\k*\a*1.6*0.88}) (-1.4,{\k*\a*1.4*0.70})
  };
  \draw[thin] plot[smooth] coordinates {
    (-1.3,{-\k*\a*1.3*0.90}) (-1.1,{-\k*\a*1.1*0.55})
    (-0.9,{-\k*\a*0.9*0.88}) (-0.7,{-\k*\a*0.7*0.72})
  };
  \draw[thin] plot[smooth] coordinates {
    (-0.6,{\k*\a*0.6*0.80}) (-0.45,{\k*\a*0.45*0.55})
    (-0.25,{\k*\a*0.25*0.85}) (0.0,0.0)
  };
  \draw[thin] plot[smooth] coordinates {
    (0.12,{\k*\a*0.12*0.90}) (0.30,{\k*\a*0.30*0.55})
    (0.55,{\k*\a*0.55*0.88}) (0.75,{\k*\a*0.75*0.70})
  };
  \draw[thin] plot[smooth] coordinates {
    (0.85,{-\k*\a*0.85*0.90}) (1.05,{-\k*\a*1.05*0.55})
    (1.25,{-\k*\a*1.25*0.88}) (1.45,{-\k*\a*1.45*0.70})
  };
  \draw[thin] plot[smooth] coordinates {
    (1.55,{\k*\a*1.55*0.92}) (1.85,{\k*\a*1.85*0.55})
    (2.05,{\k*\a*2.05*0.90}) (2.25,{\k*\a*2.25*0.72})
  };
  \draw[thin] plot[smooth] coordinates {
    (2.35,{-\k*\a*2.35*0.92}) (2.65,{-\k*\a*2.65*0.55})
    (2.95,{-\k*\a*2.95*0.90}) (3.25,{-\k*\a*3.25*0.72})
  };
  \draw[thin] plot[smooth] coordinates {
    (3.35,{\k*\a*3.35*0.92}) (3.70,{\k*\a*3.70*0.55})
    (4.20,{\k*\a*4.20*0.90}) (4.80,{\k*\a*4.80*0.60})
  };

  \node[right, font=\small] at (5,1.5) {$y=f(x)$};

\end{tikzpicture}
\caption{Sublinear Borel function.}
\label{fig1}
\end{figure}
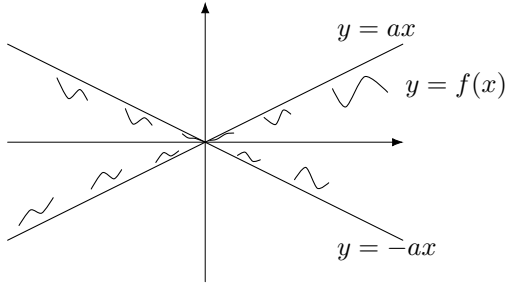

Under the assumptions detailed in Proposition~\ref{ps21}, and using the Lyapunov function
$V(x):=|x|^2$, it follows that any sequence of stationary distributions converges to $\delta_0$.
Indeed, when $g$ is bounded and continuous, the composition $g\circ f$ is $\delta_0$-a.s.\
continuous because $f$ is continuous at $0$ (and may be continuous only at that point).

\subsection{One-dimensional Lloyd's algorithm}\label{app:lloyd-1d}

\subsubsection*{Model and Lyapunov function}

This algorithm aims to determine an optimal $n$-level quantizer for a real random variable $X$
with diffuse distribution $\mu$ and finite second moment, assuming
${\rm Conv}({\rm supp}\,\mu)=[m,M]$.

Define
\[
\mathcal{O}_n:=\left\{(x^1,\cdots,x^n)\,:\, m<x^1<\cdots< x^n <M\right\}.
\]
For $x=(x^1,\cdots,x^n)\in \mathcal{O}_n$, define $f(x)=(v^1,\dots,v^n)$ with
\[
v_i:=\frac{1}{2}\bigl(g(x^{i-1},x^{i})+g(x^{i},x^{i+1})\bigr),
\qquad x^0:=m,\ x^{n+1}:=M,
\]
where, for $m\le \alpha\le \beta\le M$,
\[
g(\alpha,\beta):=\frac{\int_{\alpha}^{\beta} u\,\mu(du)}{\mu((\alpha,\beta])},
\quad \alpha<\beta,
\qquad g(\alpha,\alpha):=\alpha.
\]
Define the quantizer $Q_x$ by $Q_x(u):=g(x^{i-1},x^i)$ if $u\in[x^{i-1},x^i)$, and set
\[
V(x):=\mathbb{E}\bigl(Q_x(X)-X\bigr)^2
=\sum_{i=1}^n \int_{x^{i-1}}^{x^i}\bigl(g(x^{i-1},x^i)-u\bigr)^2\,\mu(du).
\]

\subsubsection*{Auxiliary lemmas}

\begin{lemma}\label{lzeb}
Fix $a^1,\cdots,a^n$ in $\mathbb{R}$.
The function
\[
(b^1,\cdots,b^{n+1})\mapsto \sum_{i=1}^{n+1}\int_{a^{i-1}}^{a^i}(b^i-u)^2\,\mu(du)
\]
is minimized at $b^i=g(a^{i-1},a^{i})$ for $1\le i\le n+1$, with $a^0:=m$ and $a^{n+1}:=M$.
\end{lemma}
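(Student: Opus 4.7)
The plan is straightforward: the functional to be minimized is separable in the variables $b^1,\dots,b^{n+1}$, so the minimization reduces to $n+1$ independent one-dimensional quadratic problems, each of which is solved by the conditional mean of $\mu$ on the corresponding interval.

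First I would observe that
\[
\sum_{i=1}^{n+1}\int_{a^{i-1}}^{a^i}(b^i-u)^2\,\mu(du)=\sum_{i=1}^{n+1} h_i(b^i),\qquad h_i(b):=\int_{a^{i-1}}^{a^i}(b-u)^2\,\mu(du),
\]
so that a joint minimizer exists if and only if each $h_i$ attains its minimum, and the coordinates can be optimized independently.

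Next I would minimize each $h_i$ separately. Expanding the square gives
\[
h_i(b)=\mu((a^{i-1},a^i])\,b^2-2b\int_{a^{i-1}}^{a^i}u\,\mu(du)+\int_{a^{i-1}}^{a^i}u^2\,\mu(du),
\]
which is a convex quadratic in $b$ (strictly convex when $\mu((a^{i-1},a^i])>0$, in which case the integral defining $h_i$ is also finite since $\mu$ has finite second moment and the interval is bounded by $[m,M]$). Setting $h_i'(b)=0$ yields
\[
b=\frac{\int_{a^{i-1}}^{a^i}u\,\mu(du)}{\mu((a^{i-1},a^i])}=g(a^{i-1},a^i),
\]
and convexity ensures this critical point is the unique minimizer.

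The only subtlety is the degenerate case $a^{i-1}=a^i$. Because $\mu$ is diffuse, the integral defining $h_i$ then vanishes identically, so $h_i\equiv 0$ and any value of $b^i$ minimizes it; the convention $g(\alpha,\alpha):=\alpha$ is consistent with this and introduces no ambiguity in the conclusion. I expect no real obstacle here beyond keeping the conventions on half-open intervals coherent with the definition of $g$; the heart of the lemma is simply the classical fact that the $L^2$ best constant approximation of a random variable on a measurable set is its conditional mean.
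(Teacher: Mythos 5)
Your proof is correct and is the natural argument; the paper states Lemma~\ref{lzeb} without proof, treating it as the classical fact that the $L^2$-optimal constant on each cell is the conditional mean. Your observation that the objective is separable across the $b^i$, so that each coordinate is optimized independently by minimizing a convex quadratic whose unique critical point is $g(a^{i-1},a^i)$, is exactly the intended reasoning, and your handling of the degenerate case (zero-mass cell, where $h_i\equiv 0$ and the choice of $b^i$ is immaterial) is a sensible precaution consistent with the conventions on $g$.
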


\begin{lemma}\label{ll2}
Fix $b^1,\cdots,b^{n+1}$ in $\mathbb{R}$.
The function
\[
(a^1,\cdots,a^n)\mapsto \sum_{i=1}^{n+1}\int_{a^{i-1}}^{a^i}(b^i-u)^2\,\mu(du)
\]
is minimized at $a^i=\frac{b^i+b^{i+1}}{2}$ for $1\le i\le n$.
\end{lemma}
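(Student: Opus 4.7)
The plan is to avoid any pointwise differentiation of $\mu$-integrals (which would require absolute continuity of $\mu$) and instead derive the lemma from a pointwise optimality argument. First I would rewrite the objective as a single integral against $\mu$: with the convention $a^0:=m$, $a^{n+1}:=M$, and setting $I(u):=i$ whenever $u\in[a^{i-1},a^i)$, the functional becomes
\[
F(a^1,\ldots,a^n)=\int_m^M \bigl(b^{I(u)}-u\bigr)^2\,\mu(du).
\]
For fixed $b^1,\ldots,b^{n+1}$ and each $u\in[m,M]$, this integrand is a nearest-neighbor type cost: for any admissible partition, the value of the integrand at $u$ is at least the nearest-codepoint distance. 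This yields the pointwise bound $(b^{I(u)}-u)^2\ge \min_{1\le j\le n+1}(b^j-u)^2$, hence
\[
F(a^1,\ldots,a^n)\ge \int_m^M \min_{j}(b^j-u)^2\,\mu(du),
\]
with the right-hand side independent of the $a^i$'s.

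Next I would check that the choice $a^{i,\star}:=(b^i+b^{i+1})/2$ saturates this lower bound. Assuming as in the Lloyd iteration that $b^1<b^2<\cdots<b^{n+1}$ (the $b^i$ being centroids of successive cells, this holds automatically), the midpoints form an increasing sequence, and the interval $[a^{i-1,\star},a^{i,\star}]$ coincides with the one-dimensional Voronoi cell of $b^i$ associated with $\{b^1,\ldots,b^{n+1}\}$: a point $u$ belongs to it if and only if $b^i$ is the closest codepoint to $u$. Consequently $(b^{I(u)}-u)^2=\min_j(b^j-u)^2$ for every $u\in[m,M]$, equality is attained in the bound above, and $F$ achieves its minimum at $a^{\star}$.

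The main, and essentially only, delicate point concerns the case of ties between neighboring codepoints or of $u$ lying on a midpoint: the cell boundaries are not uniquely determined by the nearest-neighbor rule, so several partitions achieve the minimum. Since $\mu$ is diffuse, the set of such points has zero $\mu$-mass and causes no measure-theoretic issue, so the midpoint formula still yields a minimizer. As an alternative under extra regularity, one could differentiate directly, obtaining, if $\mu$ admits a density $\rho$,
\[
\partial_{a^i}F=\bigl((b^i-a^i)^2-(b^{i+1}-a^i)^2\bigr)\rho(a^i),
\]
which vanishes exactly at $a^i=(b^i+b^{i+1})/2$; the pointwise argument has the advantage of bypassing any such assumption on $\mu$.
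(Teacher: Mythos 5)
Your argument is correct and is the standard nearest-neighbor lower-bound argument for the Lloyd partition step; the paper states Lemma~\ref{ll2} without displaying a proof, so there is nothing to compare line by line, but what you wrote is exactly the natural proof. One point you flag deserves emphasis: the strict ordering $b^1<\cdots<b^{n+1}$ is not merely convenient for your argument, it is \emph{necessary} for the conclusion — without it the lemma is false. For instance, with $n=1$, $b^1=2$, $b^2=1$, and $\mu$ uniform on $[0,3]$, the derivative of $a\mapsto\int_0^a(2-u)^2\,du+\int_a^3(1-u)^2\,du$ vanishes at $a=3/2$ but the second derivative is negative, so the midpoint is a local maximum, not a minimum. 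In the Lloyd context this ordering is automatic because the $b^i$ are centroids of nested consecutive cells, so the implicit hypothesis is harmless there, but it should be read into the lemma's statement. Your handling of ties (boundary points lie in multiple Voronoi cells, but these contribute zero mass since $\mu$ is diffuse, and in fact the pointwise minimum is attained at ties regardless) is also correct, and the remark that the alternative differentiation route requires $\mu$ to admit a density — whereas the pointwise argument does not — is well taken.
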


\subsubsection*{Lyapunov descent}

Combining Lemmas~\ref{lzeb} and \ref{ll2} yields
\[
V\circ f(x)\le V(x),
\qquad\text{and}\qquad
V\circ f(x)=V(x)\Rightarrow f(x)=x.
\]
Further remarks and uniqueness results under log-concavity assumptions can be found in the
references cited in the main text.

\subsection{Multi-dimensional Lloyd's algorithm}\label{app:lloyd-md}

We recall the definition of the Vorono\"{\i} mosaic and the standard Lyapunov descent argument.

\begin{definition}\label{mo}
Let $C$ be a nonempty convex and compact subset of $\mathbb{R}^d$.
The Vorono\"{\i} mosaic associated to $x:=(x^1,\cdots,x^{n})\in(\mathbb{R}^d)^n$ is the family
$\left(C_i(x)\right)_{1\leq i \leq n}$ of open convex subsets of $\mathbb{R}^d$ defined by:
\[
\left\{
\begin{array}{ll}
C_i(x):=\left\{u\in C:\ x^j\neq x^i \Rightarrow |x^i-u|<|x^j-u|\right\}
& \text{if } i=\min\{k:\ x^k=x^i\},\\
C_i(x):=\emptyset & \text{if there exists } j<i \text{ with } x^j=x^i.
\end{array}
\right.
\]
\end{definition}

The multidimensional analogues of Lemmas~\ref{lzeb}--\ref{ll2} are as follows.

\begin{lemma}\label{ll10}
Fix $a:=(a^1,\cdots,a^{n})$.
The function
\[
(b^1,\cdots,b^{n})\mapsto \sum_{i=1}^{n}\int_{C_i(a)}|b^i-u|^2\,\mu(du)
\]
is minimized at $b^i=\frac{\int_{C_i(a)}u\,\mu(du)}{\mu(C_i(a))}$.
\end{lemma}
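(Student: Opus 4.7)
The plan is to exploit the separability of the objective: since the $i$-th term in the sum depends only on $b^i$ through the integral over the fixed set $C_i(a)$, the joint minimization over $(b^1,\dots,b^n)\in(\mathbb{R}^d)^n$ decouples into $n$ independent scalar problems, one per index $i$. I can therefore treat each $i$ separately and then recombine.

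Fix an index $i$ with $\mu(C_i(a))>0$ and consider
\[
F_i(b):=\int_{C_i(a)}|b-u|^2\,\mu(du),\qquad b\in\mathbb{R}^d.
\]
The cleanest route is the bias--variance identity: setting $\bar u_i:=\mu(C_i(a))^{-1}\int_{C_i(a)}u\,\mu(du)$ and expanding $|b-u|^2=|b-\bar u_i|^2+2(b-\bar u_i)\!\cdot\!(\bar u_i-u)+|u-\bar u_i|^2$, integration of the cross term against $\mu$ over $C_i(a)$ vanishes by the very definition of $\bar u_i$. This yields
\[
F_i(b)=\mu(C_i(a))\,|b-\bar u_i|^2+\int_{C_i(a)}|u-\bar u_i|^2\,\mu(du),
\]
so $F_i$ is a strictly convex quadratic in $b$ whose unique minimizer is $b=\bar u_i$. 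Equivalently, one may expand $|b-u|^2$ directly, note that $\nabla_b F_i(b)=2\mu(C_i(a))\,b-2\int_{C_i(a)}u\,\mu(du)$, and check positivity of the Hessian $2\mu(C_i(a))I$.

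Summing the $n$ independent minima yields the claim. The only subtlety concerns indices $i$ with $\mu(C_i(a))=0$ (in particular empty cells arising from coincident seeds in Definition~\ref{mo}): there the corresponding summand vanishes identically, so any value of $b^i$ achieves the minimum and the quotient $\int_{C_i(a)}u\,\mu(du)/\mu(C_i(a))$ must be interpreted as a convention, exactly as in the scalar case treated in Lemma~\ref{lzeb}. This degenerate case is the only point where care is needed; otherwise the argument is a direct multidimensional transcription of the one-dimensional computation.
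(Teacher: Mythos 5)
Your proof is correct and is the standard argument: the objective is separable across $i$, each $F_i(b)=\int_{C_i(a)}|b-u|^2\,\mu(du)$ is a convex quadratic whose minimizer is the cell barycenter by the bias--variance identity (or by setting $\nabla_b F_i=0$), and the degenerate case $\mu(C_i(a))=0$ is handled by convention exactly as you note. The paper itself states Lemma~\ref{ll10} without any proof, presenting it merely as the multidimensional analogue of Lemma~\ref{lzeb} (also stated without proof), so there is no paper argument to compare against; your write-up supplies precisely the routine verification that the authors chose to omit.
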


\begin{lemma}\label{ll20}
Fix $(b^1,\cdots,b^n)$.
The function
\[
(a^1,\cdots, a^{n})\mapsto \sum_{i=1}^{n}\int_{C_i(a)}|b^i-u|^2\,\mu(du)
\]
is minimized at $a^i=b^i$.
\end{lemma}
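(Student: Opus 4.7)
My plan is to exploit the nearest-neighbor characterization of Voronoi cells, which is the classical mechanism behind quantization lower bounds. The key observation is that for any generator vector $a$, the cells $(C_i(a))_{1\le i\le n}$ partition $C$ up to a $\mu$-null set, since their relative boundaries lie on a finite union of hyperplanes and $\mu$ carries no mass on such lower-dimensional subsets under the standing diffuseness assumption.

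The first step is to rewrite the functional as a single integral,
\[
\sum_{i=1}^n \int_{C_i(a)} |b^i-u|^2\,\mu(du)
= \int_C |b^{\iota_a(u)}-u|^2\,\mu(du),
\]
where $\iota_a(u)$ denotes the index of the Voronoi cell containing $u$, and then to deduce the universal lower bound
\[
\sum_{i=1}^n \int_{C_i(a)} |b^i-u|^2\,\mu(du)
\ge \int_C \min_{1\le j\le n}|b^j-u|^2\,\mu(du),
\]
by replacing the integrand by its pointwise minimum over the index. Crucially, the right-hand side does not depend on $a$, so it serves as a common floor for every admissible choice of generators.

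The second step is to verify that this floor is saturated at $a=b$. By the very definition of the Voronoi cells, any $u\in C_i(b)$ satisfies $|b^i-u|\le|b^j-u|$ for every $j$, so on $C_i(b)$ one has $|b^i-u|=\min_j|b^j-u|$. Summing over $i$ and invoking once more the fact that the cells cover $C$ up to a $\mu$-null set reproduces exactly the lower bound, establishing that $a=b$ is a minimizer.

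The only mild obstacle is the bookkeeping surrounding the convention $C_i(x)=\emptyset$ for repeated indices and the possibility that several $b^j$ coincide. I expect this to be purely cosmetic: the functional depends only on the partition up to null sets, and the nearest-neighbor assignment remains unambiguously optimal in such degenerate configurations. Once this is acknowledged, the two-step pointwise-minimization argument concludes.
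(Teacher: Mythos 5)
Your argument is correct and is the standard nearest-neighbor optimality proof for Voronoi quantization; the paper states Lemma~\ref{ll20} without proof, so there is no paper argument to compare against, but this is precisely the mechanism the paper implicitly relies on (and is the exact multidimensional analogue of Lemma~\ref{ll2}, which rests on the observation that $a^i=\tfrac{b^i+b^{i+1}}{2}$ is the bisector). One small caveat worth keeping in mind: the lower bound $\sum_i\int_{C_i(a)}|b^i-u|^2\,\mu(du)\ge\int_C\min_j|b^j-u|^2\,\mu(du)$ and its saturation at $a=b$ both require $\mu$ to assign zero mass to the Voronoi boundaries, which is not literally implied by diffuseness of $\mu$ alone (a non-atomic measure can concentrate on a hyperplane); but this is a standing hypothesis of the Lloyd setup (cf.\ the absolute-continuity convention in Remark~\ref{rem:diffusivity}), so your invocation of it is legitimate.
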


Using Lemmas~\ref{ll10}--\ref{ll20}, one obtains $V\circ f\le V$ and $\{V\circ f=V\}=\Fix(f)$
as stated in the main text.

\subsection{The tent map example}\label{app:tent}

The ``tent'' map is defined by
\[
\left\{
\begin{array}{l}
f(x)=2x,\quad x\in \left[0,\frac{1}{2}\right],\\[2mm]
f(x)=2(1-x),\quad x\in \left(\frac{1}{2},1\right],\\[2mm]
f(x)=0,\quad x\in \mathbb{R}\setminus [0,1].
\end{array}
\right.
\]
It is a classical chaos generator and preserves the Lebesgue measure on $[0,1]$.
If $V$ is continuous and satisfies $V\circ f\le V$, then $V$ must be constant on $[0,1]$.
This example illustrates that in the absence of a nonconstant Lyapunov function, the localization
principle may become uninformative.

\end{document}